\documentclass[11pt]{amsart}
\usepackage{bm}
\usepackage{fullpage}
\usepackage{amssymb}
\usepackage{amsmath,amsfonts,amsthm}
\usepackage{hyperref}
\usepackage{color}
\usepackage{cite}
\usepackage{tikz}
\definecolor{uuuuuu}{rgb}{0.26666666666666666,0.26666666666666666,0.26666666666666666}

\newtheorem{theorem}{Theorem}[section]

\newtheorem{lemma}[theorem]{Lemma}

\newtheorem{definition}[theorem]{Definition}
\newtheorem{claim}[theorem]{Claim}

%

%\AtBeginEnvironment{proof-claim}{\vspace{-0.5cm}\footnotesize\begin{changemargin}{0.5cm}{0.5cm}}\AtEndEnvironment{proof-claim}{\end{changemargin}}

%\def\T{\mathbf{T}}

%\def\KG{\operatorname{KG}}

\title{The $m$-bipartite Ramsey number $BR_m(H_1,H_2)$}
\author[1]{Yaser Rowshan }

\keywords{Ramsey numbers, Bipartite Ramsey numbers, complete graphs, m-bipartite Ramsey
	number.}
\subjclass[2010]{05D10, 05C55.}
%\address{$^1$Department of Mathematics, Institute for Advanced Studies in Basic Sciences (IASBS), Zanjan 66731-45137, Iran} 

\email{y.rowshan@iasbs.ac.ir,~~~y.rowshan.math@gmail.com}

\begin{document}
	\maketitle
	
	\begin{abstract}In a $(G^1,G^2)$ coloring of a graph $G$, every edge of $G$ is in  $G^1$ or $G^2$. For two bipartite graphs $H_1$ and $H_2$, the bipartite Ramsey number $BR(H_1, H_2)$ is the least  integer $b\geq 1$, such that for every $(G^1, G^2)$ coloring of the complete bipartite graph $K_{b,b}$,  results in either  $H_1\subseteq G^1$ or $H_2\subseteq G^2$. As another  view, for bipartite graphs $H_1$ and $H_2$ and a positive integer $m$, the $m$-bipartite Ramsey number $BR_m(H_1, H_2)$ of $H_1$ and $H_2$ is the least integer $n$, such that every subgraph $G$ of $K_{m,n}$ results in $H_1\subseteq G$ or $H_2\subseteq \overline{G}$. The size of $m$-bipartite Ramsey number  $BR_m(K_{2,2}, K_{2,2})$, the size of $m$-bipartite Ramsey number $BR_m(K_{2,2}, K_{3,3})$ and the size of $m$-bipartite Ramsey number $BR_m(K_{3,3}, K_{3,3})$ have been computed in several articles up to now. In this paper  we determine the exact value of  $BR_m(K_{2,2}, K_{4,4})$ for each $m\geq 2$.
	\end{abstract}
	
	\section{Introduction}
	In a $(G^1,G^2)$ coloring of a graph $G$, every edge of $G$ is in  $G^1$ or $G^2$. For two graphs $G$ and $H$, the Ramsey number $R(G, H)$ is the smallest
	positive integer $n$, such that for every $(G^1, G^2)$ coloring of the complete graph $K_n$,  results in either  $G\subseteq G^1$ or $H\subseteq G^2$. Frank Harary always liked this notation! All	such Ramsey numbers $R(G, H)$ exist as well, for if $G$ has order $n$ and $H$ has order $m$, then $R(G, H) \leq  R(K_n, K_m)$.
	In \cite{beinere1976bipartite}, Beineke and Schwenk introduced a bipartite version of Ramsey numbers. For given bipartite graphs $H_1$ and $H_2$, the  bipartite Ramsey number $BR(H_1,H_2)$ is defined as the smallest positive integer $b$, such that any subgraph $G$ of $K_{b,b}$ results in either a $H_1\subseteq G$ or $H_2\subseteq \overline{G}$. One can refer to \cite{faudree1975path, gyarfas1973ramsey, hattingh1998star}, \cite{ raeisi2015star, bucic2019multicolour, bucic20193, lakshmi2020three}, \cite{ hatala2021new, rowshan2021size, rowshan2021proof, gholami2021bipartite} , \cite{goddard2000bipartite, longani2003some, rowshan2021multipartite, rowshan1} and their references for further studies.
	
	Suppose that  $(G^1,G^2)$ be any  $2$-edges coloring  of $K_{m,n}$ when $m\neq n$, also let $H_1$ and $H_2$ are two bipartite graphs, for each $m\geq 1$, the $m$-bipartite Ramsey number $BR_m(H_1, H_2)$ of $H_1$ and $H_2$ is the least integer $n$, so that any  $(G^1,G^2)$ coloring of $K_{m,n}$ results in either a $H_1\subseteq G^1$ or $H_2\subseteq G^2$. Z. Bi, G. Chartrand and P. Zhang in \cite{bi2018another} evaluate $BR_m(K_{2,2},  K_{3,3})$ for each $m\geq 2$. Recently Y. Rowshan and M. Gholami, by another simple proof,  evaluated $BR_m(K_{2,2},  K_{3,3})$ in \cite{2022arXiv220112844R}. Z. Bi, G. Chartrand, and P. Zhang in \cite{chartrand2021new} evaluate $BR_m(K_{3,3},  K_{3,3})$.
	
	In this article  we determine the exact value of $BR_m(K_{2,2}, K_{4,4})$ for each $m\geq 2$. In particular we prove the following theorem:
	\begin{theorem}\label{M.th}
		Let $m\geq 2 $ be a positive integer. Then:
		\[
		BR_m(K_{2,2},K_{4,4})= \left\lbrace
		\begin{array}{ll}	
			\text{does not exist}, & ~~~if~~~m=2,3,4,\vspace{.2 cm}\\
			26 & ~~~if~~~m=5,\vspace{.2 cm}\\
			22 & ~~~if~~~ m=6,7,\vspace{.2 cm}\\
			16 & ~~~if~~~ m=8,\vspace{.2 cm}\\
			14 & ~~~if~~~ m\in \{9,10\ldots,13\}.\vspace{.2 cm}\\
		\end{array}
		\right.
		\]	
	\end{theorem}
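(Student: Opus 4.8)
\emph{Proof strategy.} Write the two sides of $K_{m,n}$ as $X$ with $|X|=m$ and $Y$ with $|Y|=n$, identify a $2$-coloring with the subgraph $G\subseteq K_{m,n}$ spanned by the first color, and let $N_d$ be the number of vertices of $Y$ whose $G$-degree equals $d$. Two facts drive the whole argument. First, $K_{2,2}\not\subseteq G$ means no two vertices of $X$ have two common $G$-neighbors, so counting cherries centered in $Y$ gives $\sum_{d\ge 2}N_d\binom d2\le\binom m2$, and the neighborhoods in $X$ of the vertices of $Y$ of degree at least $2$ form a linear hypergraph on $X$ (pairwise intersections of size at most $1$). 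Second, $K_{4,4}\subseteq\overline G$ if and only if some four vertices of $Y$ have $G$-neighborhoods whose union has size at most $m-4$; equivalently, $\overline G$ is $K_{4,4}$-free if and only if every four vertices of $Y$ jointly dominate all but at most three vertices of $X$, and double counting over the $(m-4)$-subsets $T\subseteq X$ of the quantity $\#\{y\in Y:N_G(y)\subseteq T\}\le 3$ yields $\sum_{d\ge 0}N_d\binom{m-d}{4}\le 3\binom m4$.

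\emph{Nonexistence for $m\in\{2,3,4\}$.} It suffices to build, for every $n$, a $K_{2,2}$-free $G\subseteq K_{m,n}$ with $\overline G$ free of $K_{4,4}$. For $m\le 3$ take $G$ empty, since $\overline G$ has fewer than four vertices on the $X$-side. For $m=4$ take $G$ to be the star joining every vertex of $Y$ to one fixed vertex of $X$: it is $K_{2,2}$-free, and since here a $K_{4,4}$ in $\overline G$ would require four $G$-isolated vertices of $Y$ and there are none, $\overline G$ is $K_{4,4}$-free, for all $n$.

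\emph{Upper bounds.} Fix $m\in\{5,\dots,13\}$, let $n$ be the asserted value, and suppose for contradiction that a $K_{2,2}$-free $G\subseteq K_{m,n}$ has $\overline G$ free of $K_{4,4}$. The domination condition forces the four smallest $G$-degrees on $Y$ to sum to at least $m-3$, hence all but at most three vertices of $Y$ have degree at least $\lceil(m-3)/4\rceil$; together with $\sum_d N_d=n$, the cherry bound, and the $(m-4)$-covering bound, this is already contradictory by a short piece of linear arithmetic when $m\in\{5,6,7\}$. For instance, for $m=5$ and $n=26$ the covering bound reads $5N_0+N_1\le 15$, so $N_0+N_1\le 15$ and hence at least $11$ vertices of $Y$ have degree at least $2$, impossible since $\sum_{d\ge2}N_d\binom d2\le\binom 52=10$; the cases $m=6,7$ are analogous eliminations. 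For $m\in\{8,\dots,13\}$ these counting inequalities are by themselves consistent with the forbidden coloring, so one must exploit finer structure: the domination conditions for smaller subsets of $X$, the fact that a degree-$2$ vertex of $Y$ cannot have its neighborhood contained in the neighborhood of another vertex of $Y$, and the bound on the number of degree-$\ge 2$ vertices coming from the linear-hypergraph structure. This turns into a case analysis on the degree sequence on $Y$ which, for $m\in\{9,\dots,13\}$ (where $n=14$), ultimately rests on the near-uniqueness and maximality of the linear hypergraph of $4$-element blocks on $13$ points realized by the line set of $\mathrm{PG}(2,3)$. This structural step is the main obstacle.

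\emph{Lower bounds.} For each $m$ one needs an explicit $K_{2,2}$-free $G\subseteq K_{m,\,n-1}$ with $\overline G$ free of $K_{4,4}$, i.e., a linear hypergraph on $X$ with $n-1$ blocks every four of which cover at least $m-3$ points. For $m\in\{9,\dots,13\}$ (so $n-1=13$) take the $13$ lines of $\mathrm{PG}(2,3)$ and then delete $13-m$ points of $X$, chosen in general position when this removal is large; any four lines of $\mathrm{PG}(2,3)$ cover at least $10$ points (the minimum, attained by a complete quadrilateral), so at least $m-3$ survive. For $m=7$ ($n-1=21$) take all $\binom 72=21$ pairs of $X$ as the blocks, using that any four edges of $K_7$ span at least $4=m-3$ vertices. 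For $m=6$ ($n-1=21$) and $m=5$ ($n-1=25$) take all $\binom m2$ pairs of $X$ together with the maximal number of pendant blocks allowed at each point (one per point for $m=6$, three per point for $m=5$) and verify the four-cover condition directly. For $m=8$ ($n-1=15$) one exhibits a tailored $15$-block linear hypergraph on $8$ points --- for example a girth-$5$ graph of pairs on the $8$ points augmented by a small family of triples --- in which no four blocks lie inside a $4$-set. Verifying the four-cover property for these explicit families while keeping $K_{2,2}$-freeness, and dually showing in the upper-bound step that none can be extended by one further block, is where the bulk of the case work lies.
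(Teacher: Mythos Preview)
Your framework is genuinely different from the paper's and, where it works, is cleaner. The paper argues case by case on $\Delta(G_X)$ and leans on tabulated Zarankiewicz numbers; you instead dualize to the $Y$-degree sequence, use the cherry inequality $\sum_{d\ge 2}N_d\binom d2\le\binom m2$ together with the covering inequality $\sum_d N_d\binom{m-d}{4}\le 3\binom m4$, and read off the upper bound as an LP infeasibility. For $m=5,6,7$ this is complete: the LP duals $(\mu,\nu)=(1,1),(4/5,1/5),(2/7,1/7)$ give the tight bounds $25,21,21$, so $n=26,22,22$ is impossible. Your lower-bound constructions (pairs plus pendants for $m=5,6$; all $\binom 72$ pairs for $m=7$; truncated $\mathrm{PG}(2,3)$ for $9\le m\le 13$) are correct and in several cases more conceptual than the explicit colourings the paper writes down.

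The gap is exactly where you flag it. For $m\ge 8$ the two linear inequalities are no longer tight enough: at $m=8$ the LP only yields $n\le 17$, and at $m=9$ it yields roughly $n\le 16$, both strictly above the target values $16$ and $14$. You acknowledge that ``finer structure'' is needed but do not supply it; the appeals to ``near-uniqueness of $\mathrm{PG}(2,3)$'' and to a ``girth-$5$ graph on $8$ points augmented by triples'' are suggestive but are not proofs. In the paper these are precisely the hard theorems: the $m=8$ upper bound is a delicate case analysis driven by $z((8,16),K_{2,2})=38$ and $z((8,16),K_{4,4})=90$ summing to $|E(K_{8,16})|$, the $m=9$ upper bound is a lengthy structural argument, and $m\in\{10,\dots,13\}$ is dispatched by the Zarankiewicz bounds $z((10,14),K_{2,2})+z((10,14),K_{4,4})<140$. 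None of that content is recoverable from your two inequalities alone, and the $m=8$ lower-bound colouring is also not actually exhibited. So as written the proposal proves the theorem for $m\le 7$ and gives valid lower bounds for $m\ne 8$, but the cases $m\ge 8$ remain open.
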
 
	There is a familiar problem corresponding to the Ramsey number $R(K_3, K_3)$, which is stated as follows: What is the last number of people who must be present at a meeting, where every two people are either acquaintances or strangers, so that there are three among them who are either mutual acquaintances or mutual strangers? Since $R(K_3, K_3) = 6$, the answer to this question is $6$. On the other hand, for a gathering of people, five of whom are men, what is the smallest number of women who must also be present at the meeting so that there are four among them, two men and two women, where each man is an acquaintance of each woman, or there are eight among them, four men and four women, where each man is a stranger of each woman. By Theorem \ref{M.th}, as $BR_5(K_{2,2}, K_{4,4}) = 26$,
	the required number of women to be present is $26$.
	\section{Preparations}  
	Suppose that $G=(V(G),E(G))$ is a graph. The degree of a vertex $v\in V(G)$ is denoted by $\deg_G(v)$. For each $v\in V(G)$, $N_G(v) =\{ u\in V(G), ~~vu\in E(G)\}$. The maximum and  minimum degrees of  $V(G)$ are defined by $\Delta(G)$ and $\delta(G)$, respectively. 
Let $G[ X, Y]$ (or in short $[X, Y ]$), be a bipartite graph with bipartition $X \cup Y $. Suppose that $E(G[ X', Y'])$, for short  $E[ X', Y']$, denotes the edge set of $G[ X', Y']$. We use $\Delta (G_X)$ to denote the maximum degree of  vertices in  part $X$ of $G$.

 Let $(G^1,G^2)$ be a $2$-coloring of a graph $G$, then every edge of $G$ is in  $G^1$ or  $G^2$. For given  graphs $G$, $H_1$, and $H_2$, we say $G$ is $2$-colorable to $(H_1, H_2)$ if there exists a $2$-edge decomposition of $G$, say $(G^1, G^2)$ where $H_i\nsubseteq G^i$ for each $i=1,2$. We use $G\rightarrow (H_1, H_2)$, to show that  $G$  is $2$-colorable to $(H_1, H_2)$.
	\begin{definition}
		The Zarankiewicz number $z((m_1, m_2), K_{n_1, n_2})$ is defined to be the maximum number of edges in any subgraph $G$ of the complete bipartite graph $K_{m_1,m_2}$, such that $G$ does not contain  $K_{n_1,n_2}$.
	\end{definition}

	\begin{lemma}\label{l0}{(\cite{collins2016zarankiewicz})}The following results on  $z((m,n), K_{t,t})$ are true: 	
		\begin{itemize}	
			\item[$\bullet$] $z((7,14), K_{2,2})\leq  31$.
			\item[$\bullet$] $z((7,16), K_{2,2})\leq  34$.
			\item[$\bullet$] $z((8,14), K_{2,2})\leq  35$.
			\item[$\bullet$] $z((8,16), K_{2,2})\leq  38$.	
			\item[$\bullet$] $z((8,16), K_{4,4})\leq  90$.
			\item[$\bullet$] $z((9,14), K_{2,2})\leq  39$.
			\item[$\bullet$] $z((9,14), K_{4,4})\leq  88$.
			\item[$\bullet$] $z((10,14), K_{2,2})\leq 42$.
			\item[$\bullet$] $z((10,14), K_{4,4})\leq 97$.

		\end{itemize}
		\begin{proof}
			By using the bounds in Table $5$ of \cite{collins2016zarankiewicz} and Table C.0 of \cite{collins2015bipartite}, the  proposition holds.
		\end{proof}
	\end{lemma}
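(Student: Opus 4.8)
The plan is to split the nine inequalities into the six lines involving $K_{2,2}$, which I would prove by an elementary double-counting argument of K\H{o}v\'{a}ri--S\'{o}s--Tur\'{a}n type followed by a convex integer optimization, and the three lines involving $K_{4,4}$, where the same counting is only the starting point and the stated numerical values genuinely require the finer techniques of the cited references. Throughout, for $G\subseteq K_{m,n}$ I take $X$ (size $m$) to be the smaller part and $Y$ (size $n$) the larger, and I always count configurations based at the smaller part so as to make the budget as tight as possible.

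For the $K_{2,2}$ bounds, suppose $G$ is $K_{2,2}$-free and write $e_1,\dots,e_n$ for the degrees of the vertices of $Y$ into $X$, so that $e(G)=\sum_j e_j$. Since no two vertices of $Y$ have two common neighbours in $X$, counting for each $y_j$ the $\binom{e_j}{2}$ pairs of $X$ it dominates and noting each pair of $X$ is dominated at most once yields the key inequality $\sum_{j=1}^{n}\binom{e_j}{2}\le\binom{m}{2}$. It then remains to maximize $\sum_j e_j$ over nonnegative integers $e_j\le m$ under this budget. The marginal cost of raising a degree from $k$ to $k+1$ is $\binom{k+1}{2}-\binom{k}{2}=k$, which is nondecreasing, so the optimum is attained at the most balanced degree vector and is computed greedily: fill all $n$ degrees to $2$ (cost $n$), then spend the residual budget on upgrades to degree $3$ (cost $2$ each). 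For instance with $(m,n)=(7,14)$ the budget is $\binom{7}{2}=21$, supporting fourteen degree-$2$ vertices ($28$ edges, cost $14$) plus three upgrades to degree $3$ (cost $6$), giving $31$ edges; the remaining lines $(7,16),(8,14),(8,16),(9,14),(10,14)$ are identical finite checks against the respective budgets $\binom{7}{2},\binom{8}{2},\binom{8}{2},\binom{9}{2},\binom{10}{2}$.

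For the three $K_{4,4}$ bounds the analogous first step counts, for each $4$-subset of $X$, its common neighbourhood in $Y$: $K_{4,4}$-freeness forces at most $3$ common neighbours per $4$-set, which rewrites as $\sum_{j}\binom{e_j}{4}\le 3\binom{m}{4}$. The honest difficulty is that the resulting convex integer optimum \emph{overshoots} the stated values. Running the same greedy (now with marginal costs $\binom{k}{3}$) gives a balanced vector with $92$ edges for $(9,14)$ against a true value of $88$, with $93$ against $90$ for $(8,16)$, and with $102$ against $97$ for $(10,14)$. The gap arises because the ``each $4$-set covered at most thrice'' constraint cannot be met with equality by an actually realizable bipartite graph, so the K\H{o}v\'{a}ri--S\'{o}s--Tur\'{a}n relaxation stops short of the exact bounds.

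Consequently the main obstacle lies entirely in these $K_{4,4}$ lines: to descend from the relaxation values to $90$, $88$, and $97$ one must supplement the counting with finer structural and linear-programming constraints on the joint degree distribution. My plan here is to invoke the computer-assisted bounds and tables of Collins et al.\ \cite{collins2016zarankiewicz, collins2015bipartite}, whose LP-duality and exhaustive case analyses certify precisely these three inequalities; reproducing them from scratch would amount to re-running that enumeration, so I would cite them as the substantive input while presenting the double-counting above as the self-contained derivation of the six $K_{2,2}$ lines.
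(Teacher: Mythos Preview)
Your plan diverges from the paper's proof, which consists entirely of a citation: the paper simply reads the nine inequalities off Table~5 of \cite{collins2016zarankiewicz} and Table~C.0 of \cite{collins2015bipartite}, with no counting argument at all. Providing a self-contained K\H{o}v\'{a}ri--S\'{o}s--Tur\'{a}n derivation for the $K_{2,2}$ lines is a nice idea in principle, and your computations for $(7,14)$, $(7,16)$, $(8,14)$, $(8,16)$, and $(9,14)$ are correct.

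However, there is a genuine gap: the case $(m,n)=(10,14)$ does \emph{not} fall to the pure counting relaxation. With budget $\binom{10}{2}=45$ and $14$ vertices on the $Y$ side, the degree sequence consisting of thirteen $3$'s and one $4$ has cost $13\cdot\binom{3}{2}+\binom{4}{2}=39+6=45$ and $43$ edges, so your convex optimization only yields $z((10,14),K_{2,2})\le 43$, not the claimed $\le 42$. (Counting from the other side is worse, giving a bound above $47$.) Thus ``identical finite checks'' is too optimistic: the $(10,14)$ line, like the three $K_{4,4}$ lines, genuinely requires input beyond the K\H{o}v\'{a}ri--S\'{o}s--Tur\'{a}n inequality, and you must cite \cite{collins2016zarankiewicz,collins2015bipartite} for it as well. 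Once you do that you are citing the references for four of the nine bounds anyway, so the gain over the paper's one-line proof is modest; if you keep the self-contained argument, be explicit that it covers only five of the six $K_{2,2}$ inequalities.
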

	Hattingh and Henning in \cite{hattingh1998bipartite} prove that the next theorem:
	\begin{theorem}\label{th1}\cite{hattingh1998bipartite} $BR(K_{2,2}, K_{4,4})=14$.
	\end{theorem}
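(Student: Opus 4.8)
The plan is to prove the two inequalities $BR(K_{2,2},K_{4,4})\ge 14$ and $BR(K_{2,2},K_{4,4})\le 14$ separately. For the first I must exhibit a single subgraph $G\subseteq K_{13,13}$ that is simultaneously $K_{2,2}$-free and whose complement (inside $K_{13,13}$) is $K_{4,4}$-free; for the second I must show that \emph{every} $K_{2,2}$-free subgraph $G\subseteq K_{14,14}$ has a complement containing $K_{4,4}$. I expect the upper bound to be the hard direction.

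For the lower bound I would use the incidence structure of the projective plane $PG(2,3)$, which is available because $13=3^2+3+1$. Take $X$ to be its $13$ lines, $Y$ its $13$ points, and let $G$ be the incidence graph ($\ell\sim p$ iff $p\in\ell$). Then $G$ is $4$-regular, and it is $K_{2,2}$-free since two distinct points lie on a unique common line and two distinct lines meet in a unique common point, so no two vertices on either side share two common neighbours. It remains to check that $\overline{G}$ contains no $K_{4,4}$. A copy of $K_{4,4}$ in $\overline{G}$ would consist of four lines and four points with no incidences, i.e.\ four points lying off all four chosen lines. But any two of the four lines meet in a single point, so they cover at least $4\cdot 4-\binom{4}{2}=10$ of the $13$ points (with equality only in general position, and strictly more when some are concurrent); hence at most $3$ points avoid all four lines, and four such points cannot be found. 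Dually the same holds with the roles of points and lines exchanged. Thus $G$ certifies $K_{13,13}\rightarrow(K_{2,2},K_{4,4})$ and $BR(K_{2,2},K_{4,4})\ge 14$.

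For the upper bound, let $G\subseteq K_{14,14}$ with parts $X,Y$ be $K_{2,2}$-free. Counting cherries centred in $Y$ gives $\sum_{y\in Y}\binom{d_G(y)}{2}\le\binom{14}{2}=91$, and convexity turns this into an upper bound $e(G)\le z((14,14),K_{2,2})$; hence $\overline{G}$ has at least $196-z((14,14),K_{2,2})$ edges. To locate $K_{4,4}$ in $\overline{G}$ I would double count $4$-stars: writing $\overline{d}(y)=14-d_G(y)$ for the complementary degrees, the identity $\sum_{y\in Y}\binom{\overline{d}(y)}{4}=\sum_{S\in\binom{X}{4}}c(S)$, where $c(S)$ is the number of common $\overline{G}$-neighbours of the $4$-set $S$, shows that if the left side exceeds $3\binom{14}{4}=3003$ then some $S$ has $c(S)\ge 4$, producing the desired $K_{4,4}$. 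By convexity the left side is minimised at the regular degree $e(\overline{G})/14$, reducing the task to a bound on $z((14,14),K_{2,2})$ of exactly the Zarankiewicz flavour recorded in Lemma~\ref{l0} and its sources.

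The main obstacle is that this global count sits right at the boundary: with only the crude estimate $e(G)\le 57$ the $4$-star sum evaluates to roughly $2842<3003$, so the argument does not close by averaging alone. This is no accident, for the gap between the $13$ and $14$ cases is a single unit of slack, reflected in the fact that the extremal $K_{2,2}$-free configuration on $13+13$ is precisely the near-perfect $PG(2,3)$ incidence graph used above, whose complement has no $K_{4,4}$. The crux is therefore to show that no $K_{2,2}$-free graph on $14+14$ can be as edge-efficient as $PG(2,3)$ is on $13+13$: one must pin down $z((14,14),K_{2,2})$ sharply (equivalently, force the degree sequence of $\overline{G}$ to concentrate near $10$), after which the $4$-star count—or directly the complementary inequality $z((14,14),K_{2,2})+z((14,14),K_{4,4})<196$—yields $K_{4,4}\subseteq\overline{G}$ and completes the proof.
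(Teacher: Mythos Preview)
The paper does not prove this theorem; it is quoted from Hattingh and Henning \cite{hattingh1998bipartite} and used as a black box (specifically in Theorem~\ref{th7}). So there is no ``paper's own proof'' to compare against, and your proposal attempts strictly more than the paper does for this statement.

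Your lower bound via $PG(2,3)$ is correct and complete: the incidence graph is $K_{2,2}$-free, and the inclusion--exclusion argument that any four lines cover at least $10$ of the $13$ points (hence leave at most $3$ uncovered) is valid in every configuration, so $\overline{G}$ is $K_{4,4}$-free.

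The upper bound, however, is genuinely incomplete, and the fix you propose does not close the gap. You suggest that sharpening $z((14,14),K_{2,2})$ will make the $4$-star count $\sum_{y}\binom{\overline d(y)}{4}>3\binom{14}{4}=3003$ go through. It does not: even granting the exact value (and a $4$-regular $K_{2,2}$-free bipartite graph on $14+14$ vertices shows $z((14,14),K_{2,2})\ge 56$), one can have $e(\overline G)=140$ with all complementary degrees equal to $10$, giving $\sum_{y}\binom{10}{4}=14\cdot 210=2940<3003$. So the averaging argument fails at the extremal degree sequence, not merely at a crude edge bound. Your alternative, the inequality $z((14,14),K_{2,2})+z((14,14),K_{4,4})<196$, is left as an assertion; you neither compute $z((14,14),K_{4,4})$ nor cite a source for it, and it is not among the Zarankiewicz values recorded in Lemma~\ref{l0}. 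Since a $K_{4,4}$-free graph with $140$ edges would already block this route, and you have not ruled one out, the argument as written does not establish $BR(K_{2,2},K_{4,4})\le 14$. A structural case analysis on the degree sequence of $G$ (in the spirit of Lemmas~\ref{l1}--\ref{l2} and the later theorems of the paper) is what the original Hattingh--Henning proof supplies, and some such refinement beyond pure counting is needed here.
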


	\begin{lemma}\label{l1} Assume that $G$ be a subgraph of $K_{|X|,|Y|}$, where $|X|=m\geq 5$, and $|Y|=n\geq 8$. If $\Delta (G_X)\geq 8$, then either $K_{2,2} \subseteq G$ or $K_{4,4} \subseteq \overline{G}$.
	\end{lemma}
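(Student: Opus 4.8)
The plan is to run a short extremal argument: pick a vertex of $X$ realizing the large degree, and show that if $G$ avoids $K_{2,2}$, then the large neighbourhood of that vertex, together with the $K_{2,2}$-freeness, forces $K_{4,4}$ in $\overline{G}$. Notably this needs neither the Zarankiewicz bounds of Lemma~\ref{l0} nor Theorem~\ref{th1}; it is self-contained.

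First I would fix $x\in X$ with $\deg_G(x)=\Delta(G_X)\geq 8$ and put $S:=N_G(x)\subseteq Y$, so $|S|\geq 8$. Assume $K_{2,2}\nsubseteq G$; the task is then to produce $K_{4,4}\subseteq\overline{G}$. The key observation is that every other vertex of $X$ has at most one $G$-neighbour inside $S$: if some $x'\in X\setminus\{x\}$ had two $G$-neighbours $y,y'\in S$, then $x,x',y,y'$ would span a $K_{2,2}$ in $G$, a contradiction. Hence, for each $x'\in X\setminus\{x\}$, the set $B_{x'}\subseteq S$ of vertices not joined to $x'$ in $\overline{G}$ satisfies $|B_{x'}|\leq 1$.

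Next, since $m\geq 5$, I can choose four distinct vertices $x_1,x_2,x_3,x_4\in X\setminus\{x\}$. The union $B_{x_1}\cup B_{x_2}\cup B_{x_3}\cup B_{x_4}$ has at most $4$ vertices, while $|S|\geq 8$, so $S\setminus(B_{x_1}\cup\cdots\cup B_{x_4})$ contains at least four vertices $y_1,y_2,y_3,y_4$. By construction each $y_j$ is joined to each $x_i$ in $\overline{G}$, so $\{x_1,x_2,x_3,x_4\}$ together with $\{y_1,y_2,y_3,y_4\}$ induces a $K_{4,4}$ in $\overline{G}$, which finishes the proof.

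As for difficulty, there is essentially no obstacle here: the whole argument reduces to the arithmetic $|S|-4\geq 8-4=4$, matched against the ``at most one common neighbour with $x$'' constraint coming from $K_{2,2}$-freeness. The hypotheses are used only to feed this count — $m\geq 5$ gives the four choices $x_1,\dots,x_4$, and $n\geq 8$ enters solely through $|S|\geq\Delta(G_X)\geq 8$ (so in fact $n\geq 8$ is implied once $\Delta(G_X)\geq 8$, and could be dropped from the statement).
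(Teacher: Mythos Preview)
Your proof is correct and follows essentially the same approach as the paper: pick a vertex $x$ of degree at least $8$, use $K_{2,2}$-freeness to bound $|N_G(x')\cap N_G(x)|\leq 1$ for every $x'\neq x$, and then extract $K_{4,4}\subseteq\overline{G}[X\setminus\{x\},N_G(x)]$. The only difference is that you spell out the removal-of-four-bad-vertices count explicitly, whereas the paper leaves this step as ``easy to check''; your remark that the hypothesis $n\geq 8$ is redundant is also correct.
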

	\begin{proof}
		Without loss of generality	(W.l.g), let $\Delta (G_X)=8$ and  $N_G(x)=Y'$, where  $|Y'|\geq= 8$. Suppose that $K_{2,2} \nsubseteq G$, hence 	 $|N_G(x')\cap Y'|\leq 1$ for any $x'\in X\setminus\{x\}$. So, since $|X|\geq 5$ and $|Y'|= 8$, then it is easy to check that $K_{4,4} \subseteq \overline{G}[X\setminus\{x\}, Y']$.
	\end{proof}
	\begin{lemma}\label{l2} Let $G$ be a subgraph of $K_{|X|,|Y|}$, where $|X|=m\geq 9$, and $|Y|=n\geq 9$.  If $\Delta (G_X)\geq 7$, then either $K_{2,2} \subseteq G$ or $K_{4,4} \subseteq \overline{G}$.
	\end{lemma}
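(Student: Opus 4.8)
The plan is to follow the strategy of Lemma~\ref{l1}, but since the common neighbourhood we can guarantee now has only $7$ vertices (rather than $8$), a slightly sharper counting argument is needed. First I would pick a vertex $x\in X$ with $\deg_G(x)\geq 7$ and fix a set $Y'\subseteq N_G(x)$ with $|Y'|=7$, and assume that $K_{2,2}\nsubseteq G$ (otherwise we are done immediately). Exactly as in Lemma~\ref{l1}, any $x'\in X':=X\setminus\{x\}$ with two neighbours in $Y'$ would, together with $x$, span a $K_{2,2}$; hence $|N_G(x')\cap Y'|\leq 1$ for every $x'\in X'$. Note that $|X'|=m-1\geq 8$ while $|Y'|=7$, and this strict inequality is what will make the argument go through.

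Next I would classify each $x'\in X'$ as \emph{free} if $N_G(x')\cap Y'=\varnothing$ and as \emph{blocked} otherwise, writing $b(x')$ for the unique $G$-neighbour of a blocked $x'$ in $Y'$. For any $A\subseteq X'$, set $B:=Y'\setminus\{\,b(x'):x'\in A\text{ blocked}\,\}$; then no edge of $G$ joins $A$ to $B$, so $\overline{G}[A,B]$ is complete, and $|B|\geq 7-|\{\,b(x'):x'\in A\text{ blocked}\,\}|$. Thus it suffices to exhibit $A\subseteq X'$ with $|A|=4$ whose blocked members use at most $3$ distinct vertices of $Y'$, for then $|B|\geq 4$ and $K_{4,4}\subseteq\overline{G}[A,B]\subseteq\overline{G}$, which is what we need.

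Finally I would dispose of the remaining cases according to the number $k$ of free vertices in $X'$. If $k\geq 4$, any four free vertices together with any four vertices of $Y'$ work. If $1\leq k\leq 3$, take the $k$ free vertices together with $4-k$ blocked ones — available since $|X'|\geq 8$ — so the blocked members of $A$ contribute at most $4-k\leq 3$ vertices of $Y'$. If $k=0$, then all $|X'|\geq 8$ vertices of $X'$ are blocked while $|Y'|=7$, so by pigeonhole two of them have the same value $b(\cdot)$; taking those two together with two further blocked vertices yields an $A$ of size $4$ whose blocked members use at most $3$ vertices of $Y'$. The case $k=0$ is the only place where any cleverness is required, and it is exactly where the bound $|X'|\geq 8>7=|Y'|$ enters. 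In every case we obtain $K_{4,4}\subseteq\overline{G}$, as required.
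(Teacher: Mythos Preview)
Your proof is correct and follows essentially the same approach as the paper's: both fix $Y'\subseteq N_G(x)$ of size $7$, observe that $K_{2,2}\nsubseteq G$ forces $|N_G(x')\cap Y'|\leq 1$ for each $x'\in X'=X\setminus\{x\}$, and then find four vertices of $X'$ whose neighbours in $Y'$ occupy at most three slots. The paper simply splits into the two cases ``some $x'$ is free'' and ``all $x'$ are blocked'' (your $k\geq 1$ and $k=0$), handling each with a brief pigeonhole remark; your free/blocked formalism and the explicit case split by $k$ make the same argument more transparent, but the substance is identical.
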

	\begin{proof}
		W.l.g let $N_G(x)=Y'$, where  $|Y'|=7$. Suppose that  $K_{2,2} \nsubseteq G$.  Since  $|X|\geq 9$ and $|Y'|=7$, if $|N_G(x')\cap Y'|= 0$ for some $x'\in X\setminus \{x\}$ then by pigeon-hole principle one can check that $K_{4,4} \subseteq \overline{G}[X\setminus\{x\}, Y']$. So suppose that  $|N_G(x')\cap Y'|= 1$ for each $x'\in X\setminus \{x\}$. Therefore as $|X|\geq 9$ and $|Y'|=7$ it is easy to say that there exists two vertices of $X\setminus\{x\}$ say $x',x''$, such that $N_G(x')\cap Y'= N_G(x'')\cap Y' $. Let $N_G(x')\cap Y'= N_G(x'')\cap Y'=\{y'\}$.  Hence  by pigeon-hole principle one can check that $K_{4,4} \subseteq \overline{G}[X\setminus\{x\}, Y'\setminus \{y'\}]$.  Hence lemma holds.
	\end{proof}
	\section{\bf Proof of the main results}
In this section, we prove   Theorem \ref{M.th}.   To simplify the comprehension, let us split the proof of Theorem \ref{M.th} into small parts. We begin with  the following result: 
	\begin{theorem}\label{th2}
		$BR_5(K_{2,2}, K_{4,4})=26$.
	\end{theorem}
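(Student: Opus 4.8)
The plan is to establish the two bounds $BR_5(K_{2,2},K_{4,4})\ge 26$ and $BR_5(K_{2,2},K_{4,4})\le 26$ separately: the first by an explicit construction on $K_{5,25}$, the second by a short double count on the degree sequence of the $Y$-side of a subgraph of $K_{5,26}$.

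For the lower bound I would write $X=\{x_1,\dots,x_5\}$ and build $G\subseteq K_{5,25}$ as follows: let $Y$ consist of five pairwise disjoint blocks $Y_1,\dots,Y_5$ with $|Y_i|=3$, every vertex of $Y_i$ joined in $G$ only to $x_i$, together with $10$ further vertices $y_{\{i,j\}}$, one for each pair $\{i,j\}\subseteq\{1,\dots,5\}$, with $N_G(y_{\{i,j\}})=\{x_i,x_j\}$; this gives $|Y|=5\cdot 3+\binom 52=25$. Every pair $\{x_i,x_j\}$ then has exactly one common neighbour in $G$, namely $y_{\{i,j\}}$, so $K_{2,2}\nsubseteq G$. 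Moreover, for each $x_0$ the set of $Y$-vertices $y$ with $N_G(y)\subseteq\{x_0\}$ is precisely the block of size $3$ attached to $x_0$, so no $4$-element subset of $X$ has $4$ common non-neighbours in $G$, i.e.\ $K_{4,4}\nsubseteq\overline G$. Hence $BR_5(K_{2,2},K_{4,4})>25$.

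For the upper bound, assume for contradiction that some $G\subseteq K_{5,26}$ has $K_{2,2}\nsubseteq G$ and $K_{4,4}\nsubseteq\overline G$; I will derive $26\le 25$. Let $n_d$ be the number of vertices of $Y$ of degree $d$ in $G$, $0\le d\le 5$. Since $K_{2,2}\nsubseteq G$, every pair of vertices of $X$ has at most one common $G$-neighbour, so $\sum_{y\in Y}\binom{\deg_G(y)}{2}=\sum_{\{x,x'\}\subseteq X}|N_G(x)\cap N_G(x')|\le\binom 52=10$, and since $\binom d2\ge 1$ for $d\ge 2$ this gives $n_2+n_3+n_4+n_5\le 10$. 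On the other hand, because $K_{4,4}\nsubseteq\overline G$, for every $x_0\in X$ at most $3$ vertices $y$ can satisfy $N_G(y)\subseteq\{x_0\}$ — otherwise four such vertices together with $X\setminus\{x_0\}$ would span a $K_{4,4}$ in $\overline G$; the vertices counted here are exactly the isolated vertices of $Y$ and the degree-one vertices attached to $x_0$, so summing over the five choices of $x_0$ yields $5n_0+n_1\le 15$, in particular $n_0+n_1\le 15$. Adding the two estimates gives $|Y|=\sum_{d=0}^{5}n_d\le 15+10=25$, contradicting $|Y|=26$. So every $G\subseteq K_{5,26}$ contains $K_{2,2}$ or has $K_{4,4}\subseteq\overline G$, whence $BR_5(K_{2,2},K_{4,4})\le 26$; together with the lower bound this proves the theorem.

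I do not expect a serious obstacle: once the two constraints are phrased as $\sum_y\binom{\deg_G(y)}{2}\le 10$ and ``at most three $Y$-vertices have $G$-neighbourhood inside a fixed singleton'', the upper bound is immediate, and the construction is exactly the configuration saturating both simultaneously. The one point requiring care is verifying that the second constraint captures \emph{every} possible $K_{4,4}\subseteq\overline G$: this uses $|X|=5$, so any such copy must omit a single vertex $x_0$ of $X$ and then needs four $Y$-vertices avoiding all of $X\setminus\{x_0\}$ in $G$. An alternative route would first invoke Lemma \ref{l1} to reduce to the case $\Delta(G_X)\le 7$ and then compare $e(G)$ and $e(\overline G)$ with the Zarankiewicz-type numbers of Lemma \ref{l0}, but the direct degree-sequence count above seems shorter and entirely self-contained.
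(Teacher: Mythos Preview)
Your proof is correct. The lower-bound construction is in fact the same graph as the paper's (the paper lists the neighbourhoods $N_G(x_i)$ on the $X$-side, you list the $Y$-degrees, but in both cases one gets exactly ten $Y$-vertices of degree~$2$, one for each pair $\{x_i,x_j\}$, and fifteen $Y$-vertices of degree~$1$, three attached to each $x_i$).

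Your upper bound, however, follows a genuinely different and cleaner route. The paper argues from the $X$-side: it sets $\Delta=\Delta(G_X)$, uses Lemma~\ref{l1} to rule out $\Delta\ge 8$, disposes of $\Delta\le 5$ by a union-size count, and then handles $\Delta=7$ and $\Delta=6$ through separate case analyses (Claim~\ref{c1} and the paragraph after it) showing $|\bigcup_i N_G(x_i)|\le 22$ for some four vertices. Your argument works entirely on the $Y$-degree sequence: the $K_{2,2}$-freeness bound $\sum_y\binom{\deg_G(y)}{2}\le\binom{5}{2}$ caps the number of $Y$-vertices of degree $\ge 2$ at~$10$, and the $K_{4,4}$-freeness in $\overline G$ forces $5n_0+n_1\le 15$; summing gives $|Y|\le 25$ with no case split at all. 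This is both shorter and more conceptual, and it makes transparent why the extremal configuration must have exactly three pendant vertices per $x_i$ and one degree-$2$ vertex per pair---precisely the construction. The paper's approach has the mild advantage of reusing Lemma~\ref{l1} (which is needed elsewhere), but for this particular value of $m$ your double count is the more economical argument.
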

	\begin{proof}
		Suppose  that $(X=\{x_1,x_2,x_{3},x_5\},Y=\{y_1,y_2,\ldots ,y_{25}\})$ be the partition  sets of  $K=K_{5,25}$ and $G$ be a subgraph of $K$, such that for each $x\in X$, $N_G(x)$ is defined as follow:
		\begin{itemize}
		 	
			\item{\bf (A1):} $N_G(x_1)=\{y_1,y_2,\ldots, y_7\}$.	
			\item{\bf (A2):} $N_G(x_2)=\{y_1,y_8, y_9\ldots, y_{13}\}$.
			\item{\bf (A3):} $N_G(x_3)=\{y_2,y_8, y_{14},y_{15}\ldots, y_{18}\}$.
			\item{\bf (A4):} $N_G(x_4)=\{y_3,y_9, y_{14}, y_{19},\ldots, y_{22}\}$.
			\item{\bf (A5):} $N_G(x_5)=\{y_4,y_{10}, y_{15}, y_{19}, y_{23}, y_{24}, y_{25}\}$.
		\end{itemize}
		Now for each $i,j \in \{1,\ldots,5\}$, by $(Ai)$ and $(Aj)$, it can be said that  $|N_G(x_i)\cap N_G(x_j)|=1$, which means that $K_{2,2}\nsubseteq G$. Also, for each $i\in \{1,2\ldots,5\}$, it is easy to check that $|\cup_{j=1, j\neq i}^{j=5} N_G(x_j)|=22$. Therefore,  $K_{4,4}\nsubseteq \overline{G}[X\setminus\{x_i\}, Y]$ for each $x_i\in X$, which means that  $BR_5(K_{2,2}, K_{4,4})\geq 26$. 
		
		Now assume that $(X=\{x_1,x_2,x_{3},x_5\},Y=\{y_1,y_2,\ldots ,y_{26}\})$ be the partition  sets of  $K_{5,26}$ and let $G$ be a subgraph of $K_{5,26}$, such that $K_{2,2} \nsubseteq G$. Consider $\Delta=\Delta (G_X)$. Since $K_{2,2} \nsubseteq G$, if $\Delta\geq 8$, then by Lemma \ref{l1}, the proof is complete. Also, since $|Y|=26$, if $\Delta\leq 5$, then it is easy to check that $K_{4,4}\subseteq \overline{G}$. Therefore let $\Delta\in \{6,7\}$. Now we verify  the next claim.
		\begin{claim}\label{c1}
			If	$ \Delta=7$, then we have $K_{4,4}\subseteq \overline{G}$.
		\end{claim}	
		\begin{proof}[Proof of Claim \ref{c1}]
			W.l.g let $\Delta=\deg_G(x_1)$ and $ N_G(x_1)=Y_1=\{y_1,\ldots,y_7\}$. Since $K_{2,2} \nsubseteq G$, so $|N_G(x_i)\cap Y_1|\leq 1$ and $N_G(x_i)\cap Y_1\neq N_G(x_{j})\cap Y_1$ for any  $i,j\in \{2,3,4,5\}$, otherwise $K_{4,4}\subseteq \overline{G}[X,Y_1]$. Suppose that there exists a vertex of $X\setminus \{x_1\}$, so that $|N_{G}(x)|= 6$, and  w.l.g suppose that $x=x_2$ and $ N_G(x_2)=Y_2=\{y_1,y_8\ldots,y_{12}\}$. Therefore, since $K_{2,2} \nsubseteq G$, we have $|N_G(x_i)\cap Y_2\setminus\{y_1\}|\leq 1$, also one can checked that $|N_G(x_i)\cap Y_2\setminus\{y_1\}|= 1$  for at least two   $i\in \{3,4,5\}$, otherwise $K_{4,4}\subseteq\overline{G}[X\setminus\{x_2\},Y_2\setminus\{y_1\}]$. W.l.g assume that  $|N_G(x_i)\cap Y_2\setminus\{y_1\}|= 1$ for $i=3,4$. So, as $ \Delta=7$, $|N_G(x_i)\cap (Y_1\cup Y_2)|= 2$, one can say that $|N_G(x_i)\cap Y\setminus (Y_1\cup Y_2)|\leq  5$, therefore  $|\cup_{j=1}^{j=4} N_G(x_j)|\leq 22$, which means that $K_{4,4}\subseteq \overline{G}$.
			
			Now let $|N_{G}(x)|=7$ for each $x\in X$. Suppose that $N_G(x_i)=Y_i$ for each $i\in \{1,2,\ldots,5\}$. Hence Since $K_{2,2} \nsubseteq G$, one can say that  $|N_G(x_i)\cap Y_j|= 1$ and $N_G(x_i)\cap Y_j\neq N_G(x_{l})\cap Y_j$ for each  $i,l,j\in \{1,2,3,4,5\}$ where $i\neq j\neq l\neq i$, otherwise $K_{4,4}\subseteq \overline{G}[X,Y]$. Therefore it is easy to say that $|\cup_{j=1, j\neq i}^{j=5} N_G(x_j)|=22$. Hence as $|Y|=26$  for each $x\in X$ we have $K_{4,4}\subseteq \overline{G}[X\setminus\{x\}, Y]$.  So, claim  holds.
		\end{proof}
		
		Hence, we may assume that $\Delta=6$. W.l.g let  $N_G(x_1)=Y_1$. Since $K_{2,2} \nsubseteq G$, thus $|N_G(x_i)\cap Y_1|\leq 1$ for each $i\in \{2,3,4,5\}$.  Also, we may suppose that $|N_G(x_i)\cap Y_1|=1$ for at least three vertices of $X\setminus \{x_1\}$, otherwise $K_{4,4}\subseteq \overline{G}[X,Y_1]$. Now, w.l.g let $X_1=\{ x\in X\setminus \{x_1\}, ~~ |N_G(x_i)\cap Y_1|=1\}$. As $|Y|=26$, $|X_1|\geq 3$, and $ \Delta= 6$, one can check that $|\cup_{x\in X_1\cup\{x_1\}} N_G(x)|\leq 21$, which means that $K_{4,4}\subseteq \overline{G}$. Hence we have  $BR_5(K_{2,2}, K_{4,4})= 26$.
		
	\end{proof}
	In the following theorem, we compute the size of $BR_m(K_{2,2}, K_{4,4})$ for $m=6,7$.
	\begin{theorem}\label{th3}
		$BR_6(K_{2,2}, K_{4,4})=BR_7(K_{2,2}, K_{4,4})=22$.
	\end{theorem}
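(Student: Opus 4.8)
\emph{Overview of the plan.} The plan is to obtain $BR_6=BR_7=22$ from the two usual halves — a colouring that avoids both targets, and an extremal count that forces one of them — carried out on $K_{m,22}$ for $m\in\{6,7\}$. A convenient preliminary is the monotonicity $BR_{m+1}(H_1,H_2)\le BR_m(H_1,H_2)$: restricting any $(G^1,G^2)$-colouring of $K_{m+1,n}$ to the sub-$K_{m,n}$ spanned by $m$ of the left vertices shows that if $K_{m,n}\to(H_1,H_2)$ then $K_{m+1,n}\to(H_1,H_2)$. Granting this, it is enough to prove $BR_7\ge 22$ and $BR_6\le 22$, since then $22\le BR_7\le BR_6\le 22$.

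\emph{Lower bound.} For $BR_7(K_{2,2},K_{4,4})\ge 22$ I would use the incidence graph of $K_7$. Let $X=V(K_7)$ and $Y=E(K_7)$, so $|X|=7$, $|Y|=\binom{7}{2}=21$, and join $x\in X$ to $y\in Y$ exactly when $x$ is an endpoint of the edge $y$. Any two left vertices have precisely one common neighbour in $Y$ (the edge joining them), so $K_{2,2}\nsubseteq G$; and for any four left vertices $x_1,\dots,x_4$ the set $\bigcup_i N_G(x_i)$ omits exactly the $\binom{3}{2}=3$ edges of $K_7$ inside the remaining three vertices, so the common non-neighbourhood of $x_1,\dots,x_4$ has size $3<4$ and $K_{4,4}\nsubseteq\overline G$. (In the format of Theorem~\ref{th2} this is just seven degree-$6$ neighbourhoods with pairwise intersections of size one.) Deleting any one vertex of $X$ gives the analogous subgraph of $K_{6,21}$, so $BR_6\ge 22$ holds directly as well.

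\emph{Upper bound.} For $BR_6(K_{2,2},K_{4,4})\le 22$, take $G\subseteq K_{6,22}$ with $K_{2,2}\nsubseteq G$; I claim $K_{4,4}\subseteq\overline G$. Counting, from the six-vertex side, the cherries centred at the columns: each pair of left vertices lies in $N_G(y)$ for at most one column, so $\sum_{y}\binom{\deg_G(y)}{2}\le\binom{6}{2}=15$, and the K\H{o}v\'{a}ri--S\'{o}s--Tur\'{a}n estimate then gives $|E(G)|\le 38$ (in fact $z((6,22),K_{2,2})\le 37$, but $38$ suffices, and one may instead quote the Zarankiewicz tables as in Lemma~\ref{l0}). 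Hence $|E(\overline G)|=6\cdot 22-|E(G)|\ge 94$, i.e.\ the column degrees of $\overline G$, which are at most $6$, sum to at least $94$. If $\overline G$ were $K_{4,4}$-free, then for each of the $\binom{6}{4}=15$ four-subsets $S$ of left vertices at most $3$ columns $y$ could satisfy $S\subseteq N_{\overline G}(y)$ — four such columns together with $S$ would be a $K_{4,4}$ — so
\[
\sum_{y}\binom{\deg_{\overline G}(y)}{4}\;=\;\sum_{S}\#\{\,y:\ S\subseteq N_{\overline G}(y)\,\}\;\le\;3\binom{6}{4}=45 .
\]
On the other hand $\binom{t}{4}$ is convex on $\{0,1,2,\dots\}$, so under the constraints $\sum_y\deg_{\overline G}(y)\ge 94$ and $\deg_{\overline G}(y)\le 6$ the left-hand side is minimised by the flattest degree sequence — six columns of degree $5$ and sixteen of degree $4$ — which already forces $\sum_{y}\binom{\deg_{\overline G}(y)}{4}\ge 6\binom{5}{4}+16\binom{4}{4}=46>45$, a contradiction. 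Thus $K_{4,4}\subseteq\overline G$. Running the identical argument with $\binom{7}{2}=21$, $\binom{7}{4}=35$ and $|E(K_{7,22})|=154$ replacing the corresponding quantities gives $BR_7\le 22$ directly, if one prefers to bypass the monotonicity step.

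\emph{Where the difficulty lies.} The decisive inequality $46>45$ is razor-thin: it needs the Zarankiewicz-type bound in the sharp form $|E(G)|\le 38$ — anything $\ge 39$ would already defeat the count — and it needs the step ``a sum of a convex function is minimised by the flattest sequence'' executed exactly (I would spell it out by the usual smoothing: replacing two degrees that differ by at least $2$ with a closer pair of the same sum does not increase $\sum\binom{\deg}{4}$). That is the step I expect to be the real obstacle in a fully self-contained write-up. A more computational alternative, parallel to the proof of Theorem~\ref{th2}, is to split on $\Delta=\Delta(G_X)$: $\Delta\ge 8$ is dispatched immediately by Lemma~\ref{l1}, and for $\Delta\le 7$ one feeds $|E(G)|\le\min\{6\Delta,\,38\}$ into the same $K_{4,4}$-count for $\overline G$; but the uniform double count above seems cleaner, so that is the route I would take.
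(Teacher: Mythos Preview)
Your proof is correct, and the upper-bound argument is genuinely different from the paper's.

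For the lower bound, both you and the paper use the same object: the $K_7$ point--edge incidence graph on $7\times 21$ vertices, with all pairwise neighbourhood intersections equal to~$1$. The paper writes out the seven neighbourhood lists explicitly and verifies properties $(E1)$, $(E2)$ by inspection; your description via $\binom{3}{2}=3$ omitted edges is the same construction, only phrased structurally.

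For the upper bound, the approaches diverge. The paper argues by case analysis on $\Delta=\Delta(G_X)$: Lemma~\ref{l1} handles $\Delta\ge 8$; a separate claim disposes of $\Delta=7$; and for $\Delta\le 6$ two further claims pin down the exact shape of the neighbourhoods $N_G(x_i)$ until a $K_{4,4}$ is located in $\overline G$ by hand. Your route is a single double count: bound $|E(G)|\le 38$ from $\sum_y\binom{\deg_G(y)}{2}\le\binom{6}{2}$, then bound $\sum_y\binom{\deg_{\overline G}(y)}{4}$ from above by $3\binom{6}{4}=45$ if $\overline G$ is $K_{4,4}$-free, and from below by the convex minimum $46$ under $\sum_y\deg_{\overline G}(y)\ge 94$. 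The contradiction $46>45$ is indeed razor-thin, but as you note the integer Zarankiewicz value is actually $37$, so there is slack. Your approach is shorter, uniform, and mechanically checkable; the paper's buys a more explicit picture of the extremal configurations at the cost of several nested claims. Both are valid; yours is arguably the cleaner write-up.
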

	\begin{proof}
		It suffices to show that :
		\begin{itemize}
			\item{\bf (I):}  $K_{7,21} \rightarrow (K_{2,2},K_{4,4})$.
			\item{\bf (II):} $BR_6(K_{2,2}, K_{4,4})\leq 22$.
		\end{itemize}
		We begin with $(I)$. Let $(X=\{x_1,\ldots, x_7\},Y=\{y_1,\ldots ,y_{21}\})$ be the partition  sets of  $K=K_{7,21}$ and let $G$ be a subgraph of $ K$, such that for each $x\in X$ we define $N_G(x)$ as follow:
		\begin{itemize}
			
			\item{\bf (D1):} $N_G(x_1)=\{y_1,y_2,y_3, y_4,y_5, y_6\}$.	
			\item{\bf (D2):} $N_G(x_2)=\{y_1,y_7, y_8,y_9,y_{10}, y_{11}\}$.
			\item{\bf (D3):} $N_G(x_3)=\{y_2,y_7, y_{12},y_{13}, y_{14}, y_{15}\}$.
			\item{\bf (D4):} $N_G(x_4)=\{y_3,y_8, y_{12}, y_{16},y_{17}, y_{18}\}$.
			\item{\bf (D5):} $N_G(x_5)=\{y_4,y_{9}, y_{13}, y_{16}, y_{19}, y_{20}\}$.
			\item{\bf (D6):} $N_G(x_6)=\{y_5,y_{10}, y_{14}, y_{17}, y_{19}, y_{21}\}$.
			\item{\bf (D7):} $N_G(x_7)=\{y_6,y_{11}, y_{15}, y_{18}, y_{20}, y_{21}\}$.
		\end{itemize}
		Now, for each $i,j \in \{1,2,\ldots,7\}$ by $(Di)$ and $(Dj)$ it can be said that:
		\begin{itemize}
			
			\item{\bf (E1):}  $|N_G(x_i)\cap N_G(x_j)|=1$ for each $i,j\in \{1,2,\ldots,7\}$.
			\item{\bf (E2):} $|\cup_{i=1}^{i=4} N_G(x_{j_i})|=18$ for each $j_1,j_2,j_3,j_4\in \{1,\ldots,7\}$.
		\end{itemize}
		
		By $(E1)$,  one can say that $K_{2,2}\nsubseteq G$. Also by $(E2)$, it is easy to say that  $K_{4,4}\nsubseteq \overline{G}$ which means that  $K_{7,21} \rightarrow (K_{2,2},K_{4,4})$, that is the part $(I)$ is correct.

		Now we show that $(II)$ is established, that is, we show that for any  subgraph  of $K_{6,22}$ say $G$, either $K_{2,2}\subseteq G$ or $K_{4,4}\subseteq \overline{G}$. Let $(X=\{x_1,x_2,\ldots,x_6\},Y=\{y_1,y_2,\ldots ,y_{22}\})$ be the partition  sets of  $K=K_{6,22}$ and $G$ be a subgraph of $K$, where $K_{2,2} \nsubseteq G$. We show that $K_{4,4}\subseteq \overline{G}$.  Consider $\Delta=\Delta (G_X)$. Since $K_{2,2} \nsubseteq G$, by Lemma \ref{l1},  $\Delta\leq 7$.  Now we verify  the following claim.
		\begin{claim}\label{c2}
		If	$ \Delta= 7$, then $K_{4,4}\subseteq \overline{G}$.
		\end{claim}	
		\begin{proof}[Proof of Claim \ref{c2}]
		 W.l.g let $|N_G(x_1)=Y_1|=7$. Since $K_{2,2} \nsubseteq G$ for each  $i,j\in \{2,\ldots,6\}$, we have $|N_G(x_i)\cap Y_1|\leq 1$ and $N_G(x_i)\cap Y_1\neq N_G(x_{j})\cap Y_1$ , otherwise since $|Y_1|=7$, it is easy to check that $K_{4,4}\subseteq \overline{G}[X,Y_1]$. Therefore  as $|X|=6$ and $|Y_1|=7$, for each $x\neq x_1$ it can be said that  $K_{4,3}\subseteq \overline{G}[X\setminus \{x_1,x\},Y_1]$. So, as $|Y|=22$, if there exists a vertex of $Y\setminus Y_1$, so that $|N_{\overline{G}}(y)\cap (X\setminus\{x_1\})|\geq 4$, then  $K_{4,4}\subseteq \overline{G}[X\setminus \{x_1\}, Y_1\cup \{y\}]$. Hence assume that  $|N_G(y)\cap (X\setminus\{x_1\}) |\geq 2$ for each $y\in Y\setminus Y_1$, that is $|E(G[X, Y\setminus Y_1])|\geq 30$. Therefore by pigeon-hole principle, one can check that there exist at least one vertices of $X\setminus\{x_1\}$ say $x_2$, such that $|N_G(x_2)\cap (Y\setminus Y_1)|\geq 5$. Suppose that $N_G(x_2)\cap (Y\setminus Y_1)=Y_2$. So as $K_{2,2} \nsubseteq G$,  $|N_G(x_i)\cap Y_2|\leq 1$  for each  $i\in \{3,4,5,6\}$. Therefore as $|Y_2|\geq 5$,  we have $K_{4,1}\subseteq \overline{G}[X\setminus\{x_1,x_2\}, Y_2]$. Hence as $K_{4,3}\subseteq \overline{G}[X\setminus \{x_1,x_2\},Y_1]$, it can be said that, $K_{4,4}\subseteq \overline{G}[X\setminus\{x_1,x_2\}, Y_1\cup Y_2]$. Hence claim holds.
		\end{proof}
		Therefore, by Claim \ref{c2} assume that	$ \Delta\leq 6$. Let $\Delta=5$ and w.l.g suppose that $\Delta=|N_G(x_1)|$. Since $K_{2,2} \nsubseteq G$, thus $|N_G(x_i)\cap  N_G(x_1)|= 1$ for at least three vertices of $X\setminus\{x_1\}$ say $x_2,x_3,x_4$, otherwise $K_{4,4}\subseteq \overline{G}[X\setminus \{x_1\}, Y_1]$. Hence, as $\Delta=5$, one can check that $|\cup_{i=1}^{i=4}Y_i|\leq 17$, where $Y_i=N_G(x_i)$. So, since $|Y|=22$, we have  $K_{4,4}\subseteq \overline{G}$. 
		
		So let $\Delta=6$. W.l.g let $\Delta=\deg_G(x_1)$ and $ N_G(x_1)=Y_1$. Since $K_{2,2} \nsubseteq G$, thus $|N_G(x_i)\cap Y_1|\leq 1$ for each $i\in \{2,3,4,5,6\}$. Now we verify the following claim.
		\begin{claim}\label{c3}
			$|N_G(x_i)\cap Y_1|=1$ and $N_G(x_i)\cap Y_1\neq N_G(x_j)\cap Y_1$ for each $i,j\in\{2,3,4,5,6\}$.
		\end{claim}	
		\begin{proof}[Proof of Claim \ref{c3}]
			By contradiction,  w.l.g assume that  $|N_G(x_2)\cap Y_1|=0$. Therefore,  $|N_G(x_i)\cap Y_1 |= 1$ and $N_G(x_i)\cap Y_i\neq N_G(x_{j})\cap Y_i$ for each $i,j\in\{3,4,5,6\}$, otherwise $K_{4,4}\subseteq \overline{G}[X,Y_1]$. Now  as $|X|=|Y_1|=6$, it can be said that $K_{4,2}\subseteq \overline{G}[X\setminus \{x_1,x_2\},Y_1]$. If $|N_G(x_2)|=6$ then $K_{4,2}\subseteq \overline{G}[X\setminus \{x_1,x_2\}, N_G(x_2)]$, hence $K_{4,4}\subseteq \overline{G}[X\setminus \{x_1,x_2\}, Y_1\cup N_G(x_2)]$. So suppose that $|N_G(x_2)|\leq 5$. If $|N_G(x_2)|\leq 3$ then as $|Y|=22$ it is clear that $K_{4,4}\subseteq \overline{G}$. Hence  we may suppose that $4\leq |N_G(x_2)|\leq 5$. If $|N_G(x_2)|= 4$, then there exist at  least two vertices of $X\setminus\{x_1,x_2\}$ say $x',x''$, such that $|N_G(x_2)\cap N_G(x)|=1$ for each $x\in \{x',x''\}$, otherwise $K_{4,4}\subseteq \overline{G}[X\setminus\{x_2\}, N_G(x_2)]$. W.l.g assume that $x'=x_3,x''=x_4$. Hence as $\Delta=6$, $ |N_G(x_2)|=4$, and $|N_G(x_i)\cap N_G(x_j)|=1$ for each $i\in \{1,2\}$ and $j\in \{3,4\}$, one can check that $|\cup_{j=1}^{j=4} N_G(x_j)|\leq 18$, which means that $K_{4,4}\subseteq \overline{G}$. So suppose that $|N_G(x_2)|= 5$, hence for at least  three vertices of $X\setminus\{x_1,x_2\}$ say $\{x_3,x_4,x_5\}$, we have $|N_G(x_2)\cap N_G(x_i)|=1$, otherwise $K_{4,4}\subseteq \overline{G}$. If $|N_G(x_i)|\leq 5$ for at least one vertex of $\{x_3,x_4,x_5\}$, then as $\Delta=6$, $|N_G(x_2)|=5$, and $|N_G(x_i)\cap N_G(x_j)|=1$ for each $i\in \{1,2\}$ and each $j\in \{3,4\}$, one can say that $|\cup_{j=1}^{j=4} N_G(x_j)|\leq 18$, which means that $K_{4,4}\subseteq \overline{G}$. So suppose that $|N_G(x_i)|= 6$, for each $x\in \{x_3,x_4, x_5\}$. Consider $N_G(x_3)$, hence there is  at  least  one vertex of $\{x_4,x_5\}$ say $x$, such that $|N_G(x_3)\cap N_G(x_2)\cap (Y\setminus (Y_1\cup Y_2))|=1$ where $Y_i=N_G(y_i)$, otherwise $K_{4,4}\subseteq \overline{G}$. Therefore  $|N_G(x_3)\cap (Y\setminus (Y_1\cup Y_2\cup Y_3))|=3$, which means that $|\cup_{j=1}^{j=4} N_G(x_j)|= 18$. Therefore, it is easy to say that  $K_{4,4}\subseteq \overline{G}[X\setminus\{x_5,x_6\}, (Y\setminus (Y_1\cup Y_2\cup Y_3\cup Y_4)) ]$. For the case that there exists $i,j\in\{2,3,\ldots,6\}$ such that $N_G(x_i)\cap Y_1= N_G(x_j)\cap Y_1$ the proof is same.
		\end{proof}
		
		Hence by Claim \ref{c3},  w.l.g we may suppose that $Y_1=\{y_1,\ldots,y_6\}$ and let $x_iy_{i-1}\in E(G)$ for $i=2,\ldots,6$.   Now we verify the following claim.
		\begin{claim}\label{c4}
			$|N_G(x_i)|=6=\Delta$  for each $i\in\{2,3,4,5,6\}$.
		\end{claim}	
		\begin{proof}[Proof of Claim \ref{c4}]
			By contradiction,  assume that  $|N_G(x_2)|\leq 5$, that is  $|N_G(x_2)\cap (Y\setminus Y_1)|\leq 4$. W.l.g assume that $N_G(x_2)\cap (Y\setminus Y_1)=Y_2$. Since $|X|=6$, one can say that there is at least two vertices of $X\setminus \{x_1,x_2\}$ say $x_3,x_4$, such that $|N_G(x_i)\cap Y_2|=1$, for $i=2,3$, otherwise $K_{4,4}\subseteq \overline{G}[X\setminus \{x_2\}, Y_2]$. Hence as $\Delta=6$, one can check that $|\cup_{i=1}^{i=4}Y_i|\leq 18$, where $Y_i=N_G(x_i)$. So as  and $|Y|=22$, we have  $K_{4,4}\subseteq \overline{G}[ \{x_1,x_2,x_3,x_4\}, Y\setminus \cup_{i=1}^{i=4}Y_i]$
		\end{proof}
		Therefore by Claims \ref{c3} and \ref{c4}, it can be said that $|\cup_{i=1}^{i=6}Y_i|= 21$, that is there exists one vertex of $Y$, say $ y_{22}$, such that $K_{6,1}\subseteq \overline{G}[X, \{y_{22}\}]$. Suppose that $Y_i=N_G(x_i)$ for $i=1,2$, and w.l.g assume that $Y_1=\{y_1,\ldots, y_6\}$ and $Y_2=\{y_1,y_7,\ldots y_{11}\}$. Hence  by Claims \ref{c3}  and \ref{c4} it is easy to say that  there exists one vertex of $Y_1\setminus \{y_1\}$ say $y'$ and one vertex of $Y_2\setminus \{y_1\}$ say $y''$ such that $K_{4,2}\subseteq \overline{G}[\{x_3,\ldots,x_6\}, \{y',y''\}]$. Hence one can check that $K_{4,4}\subseteq \overline{G}[\{x_3,\ldots,x_6\}, \{y_1,y',y'',y_{22}\}]$.
		
		Thus, for any  subgraph of  $K_{6,22}$ say $G$, either $K_{2,2}\subseteq G$ or $K_{4,4}\subseteq \overline{G}$.	Hence, $BR_6(K_{2,2}, K_{4,4}) \leq 22$ and so by $(I)$, $BR_6(K_{2,2}, K_{4,4})= 22$. This also implies that for any  subgraph  of $K_{7,22}$ say $G$, either $K_{2,2}\subseteq G$ or $K_{4,4}\subseteq \overline{G}$. Therefore,  by $(I)$, $BR_7(K_{2,2}, K_{4,4})= 22$. Hence theorem holds.
		\
		
	\end{proof}
	%%%%%%%%%%%%%%%%%%%%%%%%%%%%%%%%%%%%%%%%%%%%%%%%%%%%%%%%%%%%%%%%%%%%%%%%%%%%%%%%%%%%%%%%%%%%%%%%%%%%%%%%%%%%%%%
	
	Let $G$ be a subgraph of  $K_{|X|, |Y|}=K_{m,n}$  where $X=\{x_1,\ldots,x_m\}$ and $Y=\{y_1,\ldots,y_n\}$ are the partition of $K_{|X|, |Y|}$. For any subgraph $G$ of $K_{m,n}$ suppose that $A[G]=[a_{ij}]$ be an $m\times n$ matrix, where for each $i\in [m]$ and each $j\in [n]$, $a_{ij}=1$ if the edges $x_iy_j\in E(G)$, and  $a_{ij}=0$ if the edges $x_iy_j\in E(\overline{G})$. The matrix $A[G]=[a_{ij}]$ represents a $2$-coloured $K_{m,n}$. In the next two theorems, we find the value of $BR_8(K_{2,2}, K_{4,4})$,   by considering a particular $2$-coloured of $K_{8,15}$ and a $2$-coloured  of $K_{8,16}$.	 
	\begin{theorem}\label{th4}
		$K_{8,15}\rightarrow (K_{2,2}, K_{4,4})$.
	\end{theorem}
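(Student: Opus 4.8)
The plan is to read the statement through the paper's own definition of the arrow: by that definition $K_{8,15}\to(K_{2,2},K_{4,4})$ asserts that $K_{8,15}$ \emph{is $2$-colorable to} $(K_{2,2},K_{4,4})$, i.e.\ there exists a subgraph $G\subseteq K_{8,15}$ (put $G^1=G$, $G^2=\overline{G}$) with $K_{2,2}\nsubseteq G$ and $K_{4,4}\nsubseteq\overline{G}$; this is precisely the kind of construction used for part $(I)$ of Theorem~\ref{th3}. Writing $X=\{x_1,\dots,x_8\}$, $Y=\{y_1,\dots,y_{15}\}$ and viewing $G$ through its $8\times 15$ red incidence pattern, the two requirements become: (i) any two rows share at most one common column, so $G$ has no red $K_{2,2}$; and (ii) for every $4$-subset $S\subseteq X$ the red neighborhoods cover many columns, $|\bigcup_{x\in S}N_G(x)|\ge 12$, so that at most $3$ columns are blue to all of $S$ and hence $\overline{G}$ has no blue $K_{4,4}$. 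As in Theorems~\ref{th2} and~\ref{th3}, I would exhibit an explicit list $N_G(x_1),\dots,N_G(x_8)$ and then discharge (i) and (ii) by two finite inspections.

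For the design I would pass to the dual incidence structure, treating the $15$ columns as points and the $8$ rows as blocks of a \emph{partial linear space} (blocks pairwise meeting in $\le 1$ point), which is exactly (i). Letting $c_j$ be the column degrees and $E=\sum_j c_j$, condition (i) is the budget $\sum_j\binom{c_j}{2}\le\binom{8}{2}=28$; by Cauchy--Schwarz this forces $\sum_j c_j^2\le 56+E$, whence $E\le 37$ and, in particular, a \emph{uniform} degree-$5$ pattern on $15$ columns is impossible. I would therefore take a non-uniform spectrum, the natural choice being $6$ columns of degree $3$ and $9$ of degree $2$ (so $E=36$ and $\sum_j\binom{c_j}{2}=27\le 28$). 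Condition (ii) also dualizes cleanly: $|\bigcup_{x\in S}N_G(x)|\ge 12$ for every $4$-set $S$ is equivalent to saying that for every complementary $4$-set $T=X\setminus S$ of rows, at most $3$ columns have their entire support inside $T$.

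The two structural facts that make (ii) tractable are these. First, since the six degree-$3$ columns are triples meeting pairwise in $\le 1$ row, no $4$-set $T$ can contain two of them (two triples inside a common $4$-set would share $\ge 6-4=2$ rows); hence each $4$-set contains at most one triple-column. Second, I would place the nine degree-$2$ columns as the edge set of a $C_4$-free bipartite graph on a $(4,4)$-split of $X$: since $z((4,4),K_{2,2})=9$ such a graph on nine edges exists, and $C_4$-freeness guarantees that every $4$-set of rows spans at most $3$ of these pair-columns. Combining the two bounds (at most one triple-column, at most three pair-columns) and aligning the triple system so that none of the $18$ pairs covered by a triple is one of the nine chosen pair-columns---which simultaneously re-secures (i) globally---I would trim the pair-graph on the $30$ triple-containing $4$-sets so they carry $\le 2$ pair-columns, yielding $\le 3$ blue columns for every $S$ and hence no blue $K_{4,4}$. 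Finally I would record the explicit neighborhoods and verify the $28$ pairwise intersections and the $70$ four-subset coverages directly; as a plausibility check, $\overline{G}$ then has $120-36=84$ edges, comfortably below the bound $z((8,16),K_{4,4})\le 90$ of Lemma~\ref{l0}.

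The hard part will be the mutual compatibility of the two subsystems. The $K_{2,2}$ budget is so tight that at $E=36$ only a single pair of rows can be globally disjoint, so the crude Bonferroni estimate $|\bigcup_{x\in S}N_G(x)|\ge\bigl(\sum_{x\in S}\deg_G(x)\bigr)-6$ is useless---four degree-$4$ rows would be certified only $\ge 10$, short of $12$. Thus (ii) cannot be read off from degrees alone and must be established through the dual ``$\le 3$ columns inside each $T$'' count, which in turn forces the triple-system and the $C_4$-free pair-graph to be chosen in tandem (no pair double-covered, no $4$-set overloaded, and $\le 2$ pair-columns on every triple-containing $4$-set). Pinning down one explicit pair of mutually compatible systems is the only genuine obstacle; once such a list $N_G(x_1),\dots,N_G(x_8)$ is fixed, both verifications are entirely finite.
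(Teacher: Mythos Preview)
Your framework and dualization are sound: interpreting the columns as points of a partial linear space and rephrasing $|\bigcup_{x\in S}N_G(x)|\ge 12$ as ``at most three columns have support inside the complementary $4$-set $T$'' is exactly the right bookkeeping. But the proposal is not a proof: the theorem is nothing more than the existence of one explicit coloring, and you never exhibit one. You acknowledge this yourself (``pinning down one explicit pair of mutually compatible systems is the only genuine obstacle''); for this statement that obstacle \emph{is} the entire content.

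More seriously, the specific scheme you commit to cannot be completed. The $C_4$-free $9$-edge bipartite graph on a $(4,4)$-split $A\cup B$ is unique up to isomorphism, with one degree-$3$ vertex $u\in A$ and one degree-$3$ vertex $w\in B$, and $uw\notin E$. A triple containing $u$ must avoid all three pairs $\{u,b\}$ with $b\in N(u)$, which forces the other two vertices into $A\cup\{w\}$; but every pair $\{a,w\}$ with $a\in A\setminus\{u\}$ is an edge, so any allowed triple through $u$ lies entirely in $A$. Symmetrically any allowed triple through $w$ lies in $B$. Since two $3$-subsets of a $4$-set meet in two points, at most one triple uses $u$ and at most one uses $w$. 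The remaining triples must then be drawn from the six allowed ``mixed'' triples on $\{1,\dots,8\}\setminus\{u,w\}$; a direct check of their pairwise intersections shows that the maximum mutually compatible family among these six has size three (the compatibility graph is two disjoint triangles). Hence you can assemble at most $1+1+3=5$ triples, never the six you need. So the column profile $(3^6,2^9)$ with a bipartite pair-graph is infeasible, and the ``trimming'' step you describe cannot be carried out.

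The paper sidesteps all of this by simply writing down an explicit $8\times 15$ incidence matrix (with row degrees $4,4,4,4,5,5,5,4$, so $35$ edges rather than your $36$) and verifying directly that any two columns meet in exactly one row and that every $4$-set of columns covers at least $12$ rows. If you want to rescue your approach, you would need either a different column-degree profile or a non-bipartite pair-graph (e.g.\ $C_8$ plus a long chord also has nine edges and at most three in every $4$-set); but at that point it is quicker to do what the paper does and just display the matrix.
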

	\begin{proof}
		\medskip
		Let $G$ be a subgraph of $K_{8,15}$ such that  $A[G]$ is shown in the following matrix:
		
		\[A[G]= A_{8\times 15}=\begin{bmatrix}
			
			1 & 1 & 1 & 1 & 0 & 0 & 0 & 0 \\
			1 & 0 & 0 & 0 & 1 & 0 & 1 & 0 \\
			1 & 0 & 0 & 0 & 0 & 1 & 0 & 1 \\
			1 & 0 & 0 & 0 & 0 & 0 & 0 & 0 \\
			0 & 1 & 0 & 0 & 1 & 0 & 0 & 0 \\
			0 & 1 & 0 & 0 & 0 & 1 & 1 & 0 \\	
			0 & 1 & 0 & 0 & 0 & 0 & 0 & 1 \\ 		
			0 & 0 & 1 & 0 & 1 & 0 & 0 & 0\\		
			0 & 0 & 1 & 0 & 0 & 1 & 0 & 0 \\ 		
			0 & 0 & 1 & 0 & 0 & 0 & 1 & 0 \\ 					
			0 & 0 & 0 & 1 & 1 & 0 & 0 & 1 \\ 		
			0 & 0 & 0 & 1 & 0 & 1 & 0 & 0 \\ 		
			0 & 0 & 0 & 1 & 0 & 0 & 1 & 0 \\
			0 & 0 & 0 & 0 & 1 & 1 & 0 & 0 \\ 		
			0 & 0 & 0 & 0 & 0 & 0 & 1 & 1\\
		\end{bmatrix}
		\]
		Set $X_1=\{x_1,x_2,x_3,x_4\}$ and $X_2=X\setminus X_1$.	Therefore by matrix $A[G]$  it can be said that the following items are true:
		
		\begin{itemize}
			\item {\bf (P1):}   $|N_G(x_i)\cap N_G(x_j)|=1$ for each $i,j\in \{1,\ldots,8\}$.	
			\item {\bf (P2):}   $|N_G(x_i)|=4$ for each $x\in X_1$.
			\item {\bf (P3):}   $|N_G(x_i)|=5$ for each $x\in X_2$.	 
			\item{\bf  (P4):}   $|\cup_{i=1}^{i=4} N_G(x_i)|=13$.
			\item{\bf  (P5):}   $|\cup_{i=5}^{i=8} N_G(x_i)|=14$.

		\end{itemize}
 Therefore by $(P1)$ we have $G$ is $K_{2,2}$-free, also	by $(P4)$ and $(P6)$, for $i=1,2$ it can be said that $K_{4,4}\nsubseteq \overline{G}[X_i, Y]$.
		Also  by $A[G]$ it can be said that the following items are true:
		\begin{itemize}
			
			\item{\bf  (M1):}   $|N_G(x_i)\cup N_G(x_j)|=7$ for each $x_i,x_j\in X_1$.
			\item{\bf  (M2):}   $|N_G(x_i)\cup N_G(x_j)\cup N_G(x_l)|=10$ for each $x_i,x_j,x_l\in X_1$.
			\item{\bf  (M3):}   $|N_G(x_i)\cup N_G(x_j)|=9$ for each $x_i,x_j\in X_2$.
			\item{\bf  (M4):}   $|N_G(x_i)\cup N_G(x_j)\cup N_G(x_l)|=12$ for each $x_i,x_j,x_l\in X_2$.
		\end{itemize}
	Now we verify the following claim.
		\begin{claim}\label{c5}
			$\overline{G}$ is $K_{4,4}$-free. 
		\end{claim}	
		\begin{proof}[Proof of Claim \ref{c5}] 
			By contradiction, suppose that $K$ is a copy of $K_{4,4}$ in $\overline{G}$, also assume that $V(K)\cap X=\{w_1,w_2,w_3,w_4\}=W$ and   $V(K)\cap Y=\{w'_1,w'_2,w'_3,w'_4\}=W'$.  Since $K_{4,4}\subseteq \overline{G}[W,W']$, then by $(P4)$ and $(M4)$, one can say that $1\leq |W\cap X_2|\leq 2$. Assume that $|W\cap X_i|=2$. W.l.g let $w_1,w_2\in X_1$ and $w_3,w_4\in X_2$. Hence, by $(M1)$, we have $|N_G(w_1)\cup N_G(w_2)|=7$. Now consider $w_3,w_4$, as  $|N_G(w_i)\cap N_G(w_j)|=1$ for each $i,j\in \{1,\ldots,8\}$, and $y_1\notin N_G(x)$ for each $x\in X_2$ one can say that 	$|N_G(w_1)\cup N_G(w_2)\cup N_G(w_i)|=10$ for each $i\in\{3,4\}$. Also, as $|N_G(w_i)\cap N_G(w_j)|=1$, it  is easy to check that  $|N_G(w_1)\cup N_G(w_2)\cup N_G(w_3)\cup N_G(w_4)|\geq 12$, which means that $K_{4,4}\nsubseteq \overline{G}[W,Y]$, a contradiction. So assume that $|W\cap X_1|=3$, and w.l.g let $w_4\in X_2$. Hence, by $(P1)$, $(P3)$, and  $(M2)$,  one can say that 	$|N_G(w_1)\cup N_G(w_2)\cup N_G(w_3)\cup N_G(w_4)|=12$, which means that $K_{4,4}\nsubseteq \overline{G}[W,Y]$, a contradiction again. Hence claim holds.	 
		\end{proof}	
		Therefore by $(P1)$ and by Claim \ref{c5}, we have the proof of the theorem is complete.	
	\end{proof}
To proving the next theorem, we need to establish the following lemma.
	\begin{lemma}\label{l3}{(\cite{collins2016zarankiewicz})}The following results on  $z((m,n), K_{t,t})$ are true: 	
	\begin{itemize}	
		\item[$\bullet$] $z((6,9), K_{2,2})\leq  21$.
		\item[$\bullet$] $z((6,12), K_{2,2})\leq  26$.
		\item[$\bullet$] $z((5,6), K_{2,2})\leq  14$.
		\item[$\bullet$] $z((7,9), K_{2,2})\leq  24$.
		\item[$\bullet$] $z((7,12), K_{2,2})\leq 28$.
			\item[$\bullet$] $z((7,16), K_{2,2})\leq 34$.
	\end{itemize}
\end{lemma}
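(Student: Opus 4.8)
The plan is to argue exactly as in the proof of Lemma~\ref{l0}. Each of the six displayed inequalities is an upper bound on a Zarankiewicz number for $K_{2,2}$ on a small board, and all of them are among the values recorded in Table~$5$ of \cite{collins2016zarankiewicz} together with the complementary Table~C.0 of \cite{collins2015bipartite}; in fact the last item, $z((7,16),K_{2,2})\le 34$, already appeared in Lemma~\ref{l0}. So the short proof is simply to quote those tables, and this is the route I would take in the paper.

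For a self-contained verification I would instead run the classical K\H{o}v\'ari--S\'os--Tur\'an double count, taking care to count on the correct side. Let $G\subseteq K_{m,n}$ have parts $A$ with $|A|=m$ and $B$ with $|B|=n$, and suppose $K_{2,2}\nsubseteq G$. Then no two vertices of $B$ have two common neighbours in $A$, so, counting for each $b\in B$ the $\binom{\deg_G(b)}{2}$ pairs of $A$ adjacent to $b$ and using that each pair of $A$ is counted at most once,
\[
\sum_{b\in B}\binom{\deg_G(b)}{2}\ \le\ \binom{m}{2}.
\]
Since $|E(G)|=\sum_{b\in B}\deg_G(b)$, bounding $|E(G)|$ reduces to a one-variable integer optimisation: for every pair $(m,n)$ in question one has $\binom{m}{2}\ge n$, so each $b\in B$ may be given degree at least $2$ (total cost $n$ on the left), while every further unit increase of some $\deg_G(b)$ past $2$ raises the left-hand side by at least $2$ and $|E(G)|$ by only $1$; hence $|E(G)|\le 2n+\bigl\lfloor(\binom{m}{2}-n)/2\bigr\rfloor$. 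Substituting $(m,n)\in\{(5,6),(6,9),(6,12),(7,9),(7,12),(7,16)\}$ gives $14,\,21,\,25,\,24,\,28,\,34$, each of which is at most the corresponding asserted bound (with equality in five of the six cases).

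There is essentially no obstacle here; the only delicate point is the choice of side in the double count. Counting cherries centred on $B$ makes the binding budget $\binom{m}{2}$, whereas centring them on $A$ would give only the much weaker $\binom{n}{2}$ and would not suffice. Once that choice is made, the degree optimisation is entirely routine, as is reading the entries off the tables in \cite{collins2016zarankiewicz,collins2015bipartite}.
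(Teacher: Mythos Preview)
Your proposal is correct. The citation-based part is exactly what the paper does (the paper's proof of this lemma reads in full: ``By using the bounds in Table~5 of \cite{collins2016zarankiewicz} and Table~C.4 of \cite{collins2015bipartite}, the proposition holds''), so apart from the cosmetic C.0/C.4 discrepancy you have reproduced the paper's argument verbatim.

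Your second paragraph, however, goes beyond the paper: the K\H{o}v\'ari--S\'os--Tur\'an double count with cherries centred on the $n$-side, followed by the convexity/greedy optimisation, is a self-contained proof that the paper does not give. The inequality $\binom{d}{2}\ge 2d-3$ for all integers $d\ge 0$ makes your bound $|E(G)|\le\bigl\lfloor(3n+\binom{m}{2})/2\bigr\rfloor=2n+\lfloor(\binom{m}{2}-n)/2\rfloor$ immediate, and substituting the six pairs indeed yields $14,21,25,24,28,34$. This buys independence from the external tables and, incidentally, sharpens the $(6,12)$ entry from $26$ to $25$; the paper's route buys brevity. Your remark about the choice of side is also accurate: centring cherries on the $m$-side would cap the sum by $\binom{n}{2}$ with only $m$ summands, giving (for instance) $44$ instead of $34$ at $(7,16)$.
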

	\begin{proof}
		By using the bounds in Table $5$ of \cite{collins2016zarankiewicz} and Table C.4 of \cite{collins2015bipartite}, the  proposition holds.
	\end{proof}

	\begin{theorem}\label{th5}
		$BR_8(K_{2,2}, K_{4,4})=16$.
	\end{theorem}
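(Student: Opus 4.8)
The bound $BR_8(K_{2,2},K_{4,4})\ge 16$ is immediate from Theorem \ref{th4}, so it suffices to prove $BR_8(K_{2,2},K_{4,4})\le 16$. Let $G\subseteq K_{8,16}$ with bipartition $(X,Y)$, $|X|=8$, $|Y|=16$, suppose $K_{2,2}\nsubseteq G$, and assume for contradiction that $K_{4,4}\nsubseteq\overline G$. The first step is to pin down the edge count. Since $G$ is $K_{2,2}$-free, Lemma \ref{l0} gives $|E(G)|\le z((8,16),K_{2,2})\le 38$; since $\overline G$ is $K_{4,4}$-free it gives $|E(\overline G)|\le z((8,16),K_{4,4})\le 90$, hence $|E(G)|=128-|E(\overline G)|\ge 38$. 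Therefore $|E(G)|=38$.

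Next I would extract the rigid structure forced by this. For $y\in Y$ the number $\binom{d_G(y)}{2}$ counts the pairs of $X$ having $y$ as a common neighbour, so $\sum_{y\in Y}\binom{d_G(y)}{2}=\sum_{\{x,x'\}\subseteq X}|N_G(x)\cap N_G(x')|\le\binom{8}{2}=28$, each term being at most $1$. On the other hand $\sum_{y}d_G(y)=38$ and $t\mapsto\binom{t}{2}$ is convex, so $\sum_y\binom{d_G(y)}{2}$ is minimised when the degrees are as equal as possible, namely ten $d_G(y)$ equal to $2$ and six equal to $3$, which already gives $\sum_y\binom{d_G(y)}{2}\ge 28$. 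Hence equality holds throughout: exactly ten vertices of $Y$ have $G$-degree $2$, exactly six have $G$-degree $3$, and \emph{every} pair of vertices of $X$ has exactly one common $G$-neighbour. Regarding each $y\in Y$ as a ``block'' $\{x\in X:\ xy\in E(G)\}$ of size $2$ or $3$, the blocks cover every pair of $X$ exactly once; counting the $7$ pairs through a fixed $x\in X$ shows $x$ lies in $2d_G(x)-7$ blocks of size $2$ and $7-d_G(x)$ of size $3$, so $d_G(x)\in\{4,5,6,7\}$ for every $x\in X$. Since $\sum_{x}d_G(x)=38$, exactly one of the following holds: (i) at least four vertices of $X$ have degree $4$; (ii) exactly three do, forcing the degree sequence $(6,5,5,5,5,4,4,4)$; (iii) exactly two do, forcing $(5,5,5,5,5,5,4,4)$.

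The core of the argument is to exhibit, in each case, a set $X'\subseteq X$ with $|X'|=4$ such that $\bigl|\bigcup_{x\in X'}N_G(x)\bigr|\le 12$; then at least $16-12=4$ vertices of $Y$ are non-neighbours in $G$ of all four vertices of $X'$, and those together with $X'$ span a $K_{4,4}$ in $\overline G$, contradicting the assumption. For a $4$-set $X'=\{w_1,w_2,w_3,w_4\}$, inclusion--exclusion together with the design property (all pairwise intersections of the $N_G(w_i)$ have size $1$, the quadruple intersection is empty since blocks have size at most $3$) gives $\bigl|\bigcup_i N_G(w_i)\bigr|=\bigl(\sum_i d_G(w_i)\bigr)-6+\varepsilon$, where $\varepsilon\in\{0,1\}$ is the number of triples contained in $X'$ that are blocks — at most one, since two such triples would cover a common pair twice. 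So it is enough to find $X'$ with $\sum_i d_G(w_i)\le 17$, except that when $\sum_i d_G(w_i)=18$ one must also arrange $\varepsilon=0$. In case (i), four degree-$4$ vertices give $\sum d_G(w_i)=16$; in case (ii), the three degree-$4$ vertices together with a degree-$5$ one give $\sum d_G(w_i)=17$. Case (iii) is the genuine obstacle: every $4$-set has total degree at least $18$, so one must pick the two degree-$4$ vertices $u,v$ and two degree-$5$ vertices $a,b$ so that none of $\{u,v,a\},\{u,v,b\},\{u,a,b\},\{v,a,b\}$ is a block. Here I would use that $u$ (resp. $v$) lies in exactly three size-$3$ blocks: the pairs $\{a,b\}$ for which $\{u,a,b\}$ or $\{v,a,b\}$ is a block, together with the (at most one) vertex whose inclusion would put the block through $\{u,v\}$ inside $X'$, rule out at most $4$ of the candidate pairs when the block through $\{u,v\}$ has size $3$ (there are $\binom{5}{2}=10$ candidates) and at most $6$ when it has size $2$ (there are $\binom{6}{2}=15$ candidates); in both situations a valid pair $\{a,b\}$ survives. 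This produces the desired $K_{4,4}\subseteq\overline G$ in every case, contradicting the assumption; hence $BR_8(K_{2,2},K_{4,4})\le 16$, and with Theorem \ref{th4} we get equality.

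I expect the bookkeeping in case (iii) to be the main difficulty, since there the crude union bound only yields $\bigl|\bigcup_{x\in X'}N_G(x)\bigr|\le 13$ for every choice of $X'$, and one genuinely needs the finer block structure (the partition of the seven pairs through a degree-$4$ point into three size-$3$ blocks and one size-$2$ block) to select $a,b$ forcing $\varepsilon=0$ and hence the union down to $12$. The remaining ingredients — the convexity computation fixing the $Y$-degree sequence, the elementary inclusion--exclusion identity, and the small count of forbidden pairs in case (iii) — are routine.
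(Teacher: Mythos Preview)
Your argument is correct and takes a genuinely different route from the paper's. Both proofs start identically, using $z((8,16),K_{2,2})\le 38$ and $z((8,16),K_{4,4})\le 90$ from Lemma~\ref{l0} to force $|E(G)|=38$. From there the paper proceeds by an ad hoc case analysis (Claims~\ref{c6}--\ref{c8}), repeatedly invoking further Zarankiewicz bounds from Lemma~\ref{l3} (such as $z((6,9),K_{2,2})$, $z((5,6),K_{2,2})$, $z((7,12),K_{2,2})$, $z((6,12),K_{2,2})$) to squeeze out contradictions in each sub-case. You instead extract global structure: the convexity computation $\sum_{y}\binom{d_G(y)}{2}\ge 28$ together with the $K_{2,2}$-free bound $\le\binom{8}{2}$ pins down the $Y$-degree sequence exactly and shows that the neighbourhoods on $X$ form a pairwise balanced design with block sizes $\{2,3\}$. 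The inclusion--exclusion identity $|\bigcup N_G(w_i)|=\sum d_G(w_i)-6+\varepsilon$ then reduces the problem to a short degree-sequence case split, with case~(iii) handled by the clean count of forbidden pairs through the two degree-$4$ vertices. This buys you a shorter, more conceptual proof that needs no Zarankiewicz input beyond the two bounds already in Lemma~\ref{l0}; the paper's approach, by contrast, is more elementary in the sense of avoiding the design viewpoint but pays for it with a longer chain of extremal lemmas. Your bookkeeping in case~(iii) is sound: the forbidden pairs from $u$ and from $v$ are indeed disjoint (two blocks on the same pair would double-cover it), giving $10-4$ and $15-6$ valid choices in the two sub-cases.
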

	\begin{proof}
		 Let $G$ be any subgraph of $K_{8,16}$. Since  $|E(K_{8,16})|=128$, then we may assume that $|E(G)|=38$ and $|E(\overline{G})|= 90$, otherwise by  Lemma \ref{l0} as	$z((8,16), K_{2,2})\leq  38$ and $z((8,16), K_{4,4})\leq  90$ it can be said that either $K_{2,2}\subseteq G$ or $K_{4,4}\subseteq \overline{G}$, that is the proof is complete. Now let  $|E(G)|=38$,  $|E(\overline{G})|= 90$ and w.l.g assume that $K_{2,2}\nsubseteq G$. Since $z((7,16), K_{2,2})\leq  34$, if there exist a vertex of $G$ say $x$, such that $|N_G(x)|\leq 3$, then it is easy to say that $|E(G\setminus \{x\})|\geq 35$. Therefore, we have $K_{2,2}\nsubseteq G$, a contradiction. Hence, $\delta(G)=4$. Define $X'$ as follow:
		\[X'=\{x\in X, ~~\deg_G(x)= 4\}\]
		Since $|E(G)|=38$ and $|X|=8$, it is clear that  $|X'|\geq 2$. By considering $X'$,  we verify the following claim.
		\begin{claim}\label{c6}
			If there exist two members of $X'$, say $x,x'$ such that $|N_G(x)\cap N_G(x')|=1$, then $K_{4,4}\subseteq \overline{G}$. 
		\end{claim}	
		\begin{proof}[Proof of Claim \ref{c6}]
			W.l.g let $x=x_1,x'=x_2$,  $N_G(x_1)=Y_1=\{y_1,y_2,y_3,y_4\}$ and $N_G(x_2)=Y_2=\{y_1,y_5,y_6,y_7\}$. Now set $Y'=Y_1\cup Y_2$. Hence we have the following fact.
			\begin{itemize}
				\item {\bf (F1):}  There is at least one member of $X_1=X\setminus\{x_1,x_2\}$ say $x$, such that $|N_G(x)\cap Y'|=2$. 
			\end{itemize}	 
		 To proving $(F1)$, by contrary, let $|N_G(x)\cap Y'|\leq 1$ for each $x\in X_1$. Set $X_1=X\setminus\{x_1,x_2\}$ and $Y'=Y\setminus( Y_1\cup Y_2)$, therefore we have $|E(G[X_1,Y'])|\geq 25$. Hence by Lemma \ref{l3} as $z((6,9), K_{2,2})\leq 24$, we have $K_{2,2}\subseteq G$, a contradiction. Hence The Fact $(F1)$ is correct.
			
			Therefore, by $(F1)$ w.l.g let $|N_G(x_3)\cap Y'|=2$ and   $Y_3=\{y_2,y_5, y_8, y_9\}\subseteq N_G(x_3)$. If $x_3\in X'$, then by an argument similar to the proof of $(F1)$, one can check that there exists at least one member of $X\setminus\{x_1,x_2,x_3\}$ say $x$, such that $|N_G(x)|\leq 5$ and $|N_G(x)\cap (Y'\cup Y_3)|\geq 2$. Therefore, $|N_G(x)\cap Y\setminus (Y'\cup Y_3)|\leq 3$, hence it is easy to say that $K_{4,4}\subseteq \overline{G}[\{x_1,x_2,x_3,x\}, Y\setminus (Y'\cup Y_3) ]$. So, suppose that  	$|N_G(x_3)|\geq 5$. Now we verify the following two cases.
			
			{\bf Case 1:} $|N_G(x_3)|= 5$.	In this case,  we have   the following fact.
			\begin{itemize}
				\item {\bf (F2):}  There exists one member of $X\setminus\{x_1,x_2, x_3\}$ say $x$, such that $|N_G(x)|=5$ and $|N_G(x)\cap (Y'\cup Y_3)|=3$. 
			\end{itemize}	 
		For prove $(F2)$, by contrary, let $|N_G(x)\cap (Y'\cup Y_3)|\leq 2$ for each $X\setminus\{x_1,x_2, x_3\}$. Set $X_1=X\setminus\{x_1,x_2,x_3\}$ and $Y''=Y\setminus(Y'\cup Y_3)$, therefore we have $|E(G[X_1,Y''])|\geq 15$. Hence by Lemma \ref{l3} as $z((5,6), K_{2,2})\leq 14$ we have $K_{2,2}\subseteq G$, a contradiction. Hence The Fact $(F2)$ is true.			
			
			Therefore, by $(F2)$ w.l.g let $|N_G(x_4)\cap Y\setminus (Y'\cup Y_3 \cup Y'')|=3$ and let   $Y_4=\{y',y'', y'', y_{10}, y_{12}\}\subseteq N_G(x_4)$. If $|N_G(x_4)|\leq 5$, then it is easy to say that $K_{4,4}\subseteq \overline{G}[\{x_1,x_2,x_3,x_4\}, Y\setminus (Y'\cup Y_3\cup Y_4) ]$. So suppose that $|N_G(x_4)|=6$	and assume that  $Y_4=\{y',y'', y''', y_{11}, y_{12}, y_{13}\}\subseteq N_G(x_4)$. Now set $X_2=\{x_5,x_6,x_7,x_8\}$ and $Y'''=\{y_{14}, y_{15}, y_{16}\}$, therefore one can check that $|N_G(x)\cap Y'''|\geq 2$ for each $x\in X''$. Otherwise, if there is a vertex of $X''$ say $x''$, such that $|N_G(x'')\cap Y'''|\leq 1$, then it is easy to say that either $|N_G(x'')|=5$ and $|N_G(x'')\cap Y'\cup Y_3|=3$ or $|N_G(x'')|=4$ and $|N_G(x'')\cap Y'\cup Y_3|=2$, in any case the Fact (F2) is true by setting $x=x''$.  Therefore as $|X_2|=4$, $|Y'''|=3$ and $|N_G(x)\cap Y'''|\geq 2$ for each $x\in X''$, it is easy to checked that  $K_{2,2}\subseteq G[X_2,Y''']$, a contradiction.
			
			{\bf Case 2:} $|N_G(x_3)|= 6$.  W.l.g assume that $N_G(x_3)=Y_3=\{y_2,y_5, y_8, y_9, y_{10}, y_{11}\}$.  Now set $X_2=\{x_4, x_5,x_6,x_7,x_8\}$ and $Y'''=\{y_{12},\ldots, y_{16}\}$. If there exists a vertex of $X_2$ say $x$, such that $|N_G(x)\cap Y'''|\leq 2$, then it is easy to say that either $|N_G(x)|=5$ and $|N_G(x)\cap Y'|=2$ or $|N_G(x)|=4$ and $|N_G(x)\cap Y'|\geq 1$, in any case the proof is complete by Case 1. Therefore, let $|N_G(x)\cap Y'''|\geq 3$ for at lest three vertices of $x\in X''$. Now as  $|Y'''|=5$, its easy to say that $K_{2,2}\subseteq G[X'',Y''']$, a contradiction.
			
			Therefore by Cases 1,2, the proof of claim is complete.
		\end{proof}
		Now by Claim \ref{c6}, we have the following claim.
		\begin{claim}\label{c7}
			$ \Delta=5$.
		\end{claim}	
		\begin{proof}[Proof of Claim \ref{c7}]
			By contradiction,  suppose that $\Delta=6$, therefore $|X'|\geq 3$. W.l.g let $|N_G(x_1)=Y_1|=\Delta$ and $x_2,x_3,x_4\in X'$. Since $K_{2,2} \nsubseteq G$, thus $|N_G(x)\cap Y_1|\leq 1$ for each $x\in X\setminus\{x_1\}$. Also by Claim \ref{c6} we have $|N_G(x)\cap N_G(x')|=0$ for each $x,x'\in X'$. So, w.l.g let $N_G(x_2)=Y_2=\{y_1,y_2,y_3,y_4\}$, $N_G(x_3)=Y_3=\{y_5,y_6,y_7,y_8\}$ and $N_G(x_4)=Y_4=\{y_9,y_{10},y_{11},y_{12}\}$. Set $W=Y\setminus (Y_1\cup Y_2\cup Y_3)= \{y_{13}, y_{14}, y_{15}, y_{16}\}$. As $N_G(x_1)=Y_1$ and $K_{2,2} \nsubseteq G$, we have $|Y_1\cap W|\geq 3$. W.l.g let $W'=\{y_{13}, y_{14}, y_{15}\}\subseteq Y_1\cap W$. Also as $|N_G(x)\cap W|\geq 2$ for each $x\in X\setminus \{x_1,\ldots,x_4\}$, then it is easy to check  that  $K_{2,2} \subseteq G[\{x_5,x_6,\ldots,x_8\}, W]$, a contradiction. Hence claim holds.	
		\end{proof}
		Now by Claim \ref{c7}, and as $|E(G)|=38$, we have $|X'|=2$. W.l.g let $X'=\{x_1,x_2\}$. Also by Claim \ref{c6}, w.l.g suppose that $N_G(x_1)=Y_1=\{y_1,y_2,y_3,y_4\}$, $N_G(x_2)=Y_2=\{y_5,y_6,y_7,y_8\}$.  Set $X_1=X\setminus \{x_1,x_2\}$. Now we verify the next claim.
		\begin{claim}\label{c8}
			For each $x\in X_1$ and  $i=1,2$, we have $|N_G(x)\cap Y_i|=1$.
		\end{claim}	
		\begin{proof}[Proof of Claim \ref{c8}]
			By contrary, w.l.g  suppose that $|N_G(x_3)\cap Y_1|=0$. Since $|X|=8$, and $|N_G(x)|=5$ for each $x\in X_1$,   either if $|N_G(x_3)\cap Y_2|=0$ or there exists a vertex of 	$X_1\setminus\{x_3\}$ such that $|N_G(x)\cap Y_1|=0$, then it is easy to say that $K_{2,2}\subseteq G[X_1, Y\setminus (Y_1\cup Y_2)]$, a contradiction. So let $|N_G(x_3)\cap Y_1|=0$,  $|N_G(x_3)\cap Y_2|=1$ and w.l.g let $N_G(x_3)=Y_3=\{y_5,y_9,y_{10},y_{11}, y_{12}\}$.  Set $X'=X\setminus\{x_1\}$ and $Y'=Y\setminus Y_1$. Since $K_{2,2}\nsubseteq G$, we have  $|N_G(x)\cap Y_1|\leq 1$ for each $x\in X'\setminus\{x_3\}$ and as  $|N_G(x_2)\cap Y_1|=0$ we have $|E(G[X,Y_1])|\leq 9$. Therefore as  $|N_G(x_1)= Y_1|=4$, we  have  $|E(G[X',Y'])|\geq 29$. Hence by Lemma \ref{l3} as $z((7,12), K_{2,2})\leq 28$ we have $K_{2,2}\subseteq G$, a contradiction. Hence claim holds.
		\end{proof}
		Therefore by Claim \ref{c8}, we have  $|N_G(x)\cap Y_i|=1$ for each $x\in X_1$ and each $i=1,2$. If there is a vertex of $Y_1$ or $Y_2$ say $y$, so that  $|N_G(y)\cap X_1|\geq 3$ then as $|Y\setminus(Y_1\cup Y_2)|=8$ and  $|N_G(x)\cap Y\setminus(Y_1\cup Y_2)|= 3$,   by pigeon-hole principle one can check that  $K_{2,2}\subseteq G$, a contradiction. Therefore we may suppose that  $|N_G(y)\cap X_1|\leq 2$ for each $x\in X_1$. Therefore as $|X_1|=6$ and $|Y_1|=4$ there exist two member of $Y_1$ say $y,y'$ such that $|N_G(y)\cap X_1|=|N_G(y')\cap X_1|=1$. W.l.g let $y=y_1$ and $y'=y_2$. By  symmetry as $|X_1|=6$ and $|Y_2|=4$ there exist two member of $Y_2$ say $y'',y'''$ such that $|N_G(y'')\cap X_1|=|N_G(y''')\cap X_1|=1$. W.l.g let $y''=y_5$ and $y'''=y_6$. Noe set  $X'=X\setminus\{x_1, x_2\}$ and $Y'=Y\setminus \{y_1,y_2,y_5,y_6\}$. Since  $|N_G(x_i)|= 4$ for each $i=1,2$, $y_1,y_2\in N_G(x_1)$, $y_5,y_6\in N_G(x_2)$ and as $|N_G(y)\cap X_1|=1$ for each $y\in Y'$, then one can say that   $|E(G[X', \{y_1,y_2,y_5,y_6\}])|= 8$ and $|E(G[\{x_1,x_2\},Y'])|= 4$, therefore as $|E(G)|=38$ one can  said that   $|E(G[X',Y']|= 26$. Hence by Lemma \ref{l3} as $z((6,12), K_{2,2})\leq 25$ we have $K_{2,2}\subseteq G$, a contradiction.
		
		 Hence we have the assumption that  $|E(G)|=38$ and $|E(\overline{G})|= 90$ does not hold. Therefore by  Lemma \ref{l0} as	$z((8,16), K_{2,2})\leq  38$ and $z((8,16), K_{4,4})\leq  90$, it can be said that either $K_{2,2}\subseteq G$ or $K_{4,4}\subseteq \overline{G}$, that is  $BR_8(K_{2,2}, K_{4,4})\leq 16$.
		 Therefore, by Theorem \ref{th4}, we have $BR_8(K_{2,2}, K_{4,4})=16$, and the proof is complete.
		
	\end{proof}
	In the following  theorem,	 we find the value of $BR_9(K_{2,2}, K_{3,3})$.
	\begin{theorem}\label{th6}
		$BR_9(K_{2,2}, K_{3,3})=14$.
	\end{theorem}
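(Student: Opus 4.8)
The plan is to prove the two inequalities $BR_9(K_{2,2},K_{3,3})\ge 14$ and $BR_9(K_{2,2},K_{3,3})\le 14$ separately, following the template of Theorems \ref{th2}--\ref{th5}. For the lower bound I would display one subgraph $G$ of $K_{9,13}$ for which $K_{2,2}\nsubseteq G$ and $K_{3,3}\nsubseteq\overline{G}$, thereby exhibiting $K_{9,13}\to(K_{2,2},K_{3,3})$. Concretely, I would prescribe the nine neighbourhoods $N_G(x_1),\dots,N_G(x_9)\subseteq\{y_1,\dots,y_{13}\}$ explicitly, in the spirit of items (A1)--(A5) and (D1)--(D7), arranged so that any two of them meet in at most one vertex (this already forces $K_{2,2}\nsubseteq G$, exactly as argued from (E1)) and so that the union of any three of them omits at most two vertices of $Y$ (which forces $K_{3,3}\nsubseteq\overline{G}$). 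Verifying these two incidence properties is a finite check once the pattern is fixed.

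For the upper bound I would show that every $G\subseteq K_{9,14}$ with $K_{2,2}\nsubseteq G$ satisfies $K_{3,3}\subseteq\overline{G}$, organising the argument by $\Delta=\Delta(G_X)$. The large-degree regime is handled by the same pigeonhole argument as in Lemmas \ref{l1} and \ref{l2}: if some $x$ has $N_G(x)=Y'$ with $|Y'|$ large, then $K_{2,2}$-freeness gives $|N_G(x')\cap Y'|\le 1$ for every other $x'$, and since $|X\setminus\{x\}|=8$ the pigeonhole principle locates three rows and three columns inside $\overline{G}[X\setminus\{x\},Y']$, i.e. a copy of $K_{3,3}$. The small-degree regime is a counting argument: if $\Delta$ is small then $|E(G)|$ is small, so $\overline{G}$ is dense, and comparing $|E(G)|$ against the bound $z((9,14),K_{2,2})$ from Lemma \ref{l0} together with the corresponding Zarankiewicz bound for $K_{3,3}$ forces a $K_{3,3}$ in $\overline{G}$.

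The middle range of $\Delta$ is where the real work lies, and I would treat it by the same claim-based bookkeeping used in Theorem \ref{th3}. Fix a vertex $x_1$ of maximum degree and set $Y_1=N_G(x_1)$; record that $|N_G(x_i)\cap Y_1|\le 1$ for all $i$, and establish claims in the spirit of Claims \ref{c3} and \ref{c4} pinning down how many of these intersections are forced to equal $1$ and how many rows must attain degree $\Delta$. Tracking $|\cup_j N_G(x_j)|$ over a well-chosen triple of rows then shows that at least three columns are adjacent in $\overline{G}$ to three rows simultaneously, yielding the desired $K_{3,3}$. Combining the two directions gives $BR_9(K_{2,2},K_{3,3})=14$.

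I expect the main obstacle to be precisely this middle-$\Delta$ analysis, where neither the degree-extremal pigeonhole nor the crude edge count alone closes the gap; the delicate point is to guarantee, through the coincidence bounds $|N_G(x_i)\cap N_G(x_j)|\le 1$, that some three rows leave three common columns untouched in $G$. A secondary difficulty is the lower-bound construction on $K_{9,13}$ itself: balancing ``few pairwise coincidences'' (to kill $K_{2,2}$ in $G$) against ``large triple unions'' (to kill $K_{3,3}$ in $\overline{G}$) is the crux of the $\ge 14$ direction, and finding an explicit near-regular incidence pattern that meets both demands is the part I would verify most carefully.
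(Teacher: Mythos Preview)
There is a basic mismatch you should notice first: despite the $K_{3,3}$ in the displayed statement, Theorem~\ref{th6} is part of the paper's programme for $BR_m(K_{2,2},K_{4,4})$ (see Theorem~\ref{M.th}), and the paper's own proof of Theorem~\ref{th6} establishes $BR_9(K_{2,2},K_{4,4})=14$, not the $K_{3,3}$ version. Every claim in that proof (Claims~\ref{c9}--\ref{c17}) produces a copy of $K_{4,4}$ in $\overline{G}$, and the Zarankiewicz inputs used are $z((9,14),K_{2,2})\le 39$ and $z((9,14),K_{4,4})\le 88$ from Lemma~\ref{l0}. Your entire outline is aimed at $K_{3,3}$, so it is simply not a proof of what the paper proves here; the ``three rows and three columns'' pigeonhole and the ``union of any three neighbourhoods'' criterion are the wrong targets.

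Even if one reinterprets your sketch with $K_{4,4}$ in place of $K_{3,3}$, two concrete gaps remain relative to the paper. First, the paper does not build a lower-bound witness on $K_{9,13}$; it takes the inequality $BR_9(K_{2,2},K_{4,4})\ge 14$ for free from Theorem~\ref{th1} (the Hattingh--Henning result $BR(K_{2,2},K_{4,4})=14$ gives a good $2$-colouring of $K_{13,13}$, and hence of $K_{9,13}$). Your plan to design nine neighbourhoods with the stated union property is unnecessary and, for $K_{4,4}$, would require that every \emph{four} neighbourhoods cover at least ten of the thirteen columns, a stronger constraint than what you wrote. Second, your ``middle-$\Delta$'' paragraph drastically understates the work: after Lemma~\ref{l2} disposes of $\Delta\ge 7$ and Claim~\ref{c9} of $\Delta=6$, the paper is forced into the narrow window $38\le |E(G)|\le 39$, then introduces the sets $A=\{x:\deg_G(x)=5\}$ and $A'=\{x:\deg_G(x)=4\}$, proves $2\le|A|\le 3$ (Claims~\ref{c10}--\ref{c12}), eliminates $|A|=3$ via Claim~\ref{c13}, and only then splits on $|N_G(y_1)\cap A'|\in\{0,1\}$ with further structural Claims~\ref{c14}--\ref{c17}. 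None of this scaffolding appears in your outline, and the generic ``track $|\cup_j N_G(x_j)|$ over a triple of rows'' is not how the $K_{4,4}$ is actually located.
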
	
	\begin{proof}
		 Consider $K=K_{9,14}$, let $(X=\{x_1,\ldots,x_9\},Y=\{y_1,y_2,\ldots ,y_{14}\})$ be the partition  sets of  $K$, and  $G$ be a subgraph of $K$ such that $K_{2,2}\nsubseteq G$.  Consider $\Delta=\Delta (G_X)$. By Lemma \ref{l2}, $\Delta\leq 6$. Now, we verify the following claim.
		\begin{claim}\label{c9}
			If	$ \Delta=6$, then $K_{4,4}\subseteq \overline{G}$.
		\end{claim}	
		\begin{proof}[Proof of Claim \ref{c9}]
			W.l.g let  $|N_G(x_1)=Y_1|=6$. Since $K_{2,2} \nsubseteq G$, thus $|N_G(x)\cap Y_1|\leq 1$ for each $x\in X\setminus\{x_1\}$. Therefore,  as $|X|=9$ and $|Y_1|=6$,  if there is a vertices of $X\setminus \{x_1\}$ or a vertex of $Y_1$ say $x, y$ respectively, such that either $|N_{G}(x)\cap Y_1|=0$ or  $|N_{G}(y)\cap X\setminus\{x_1\}|\geq 3$, then  by pigeon-hole principle one can check that $K_{4,4}\subseteq \overline{G}[X\setminus \{x_1\}, Y_1]$. Hence, let  $|N_G(x)\cap Y_1 |=1$ for each $x\in X\setminus \{x_1\}$ and  $|N_{G}(y)\cap X\setminus\{x_1\}|\leq 2$ for each $y\in Y_1$. Now, as $|X|=9$ and $|Y_1|=6$, there exists two member of $Y_1$ say $y_1, y_2$, so that $|N_G(y_i)\cap X\setminus\{x_1\}|= 2$. W.l.g let $N_G(y_1)\cap X\setminus\{x_1\}=\{x_2,x_3\}$ and $N_G(y_2)\cap X\setminus\{x_1\}=\{x_4,x_5\}$. Hence, $K_{4,4}\subseteq \overline{G}[\{x_2,x_3,x_4,x_5\}, Y_1\setminus\{y_1,y_2\}]$, so claim holds.
		\end{proof}
		Since  $|E(K_{9,14})|=126$, then  $38 \leq|E(G)|  \leq 39$ and $87\leq |E(\overline{G})|\leq 88$. Otherwise  by Lemma \ref{l0}, as 	$z((9,14), K_{2,2})\leq  39$ and $z((9,14), K_{4,4})\leq  88$, it can be said that either $K_{2,2}\subseteq G$ or $K_{4,4}\subseteq \overline{G}$.  Now we define $A$ as follow:
		\[ A=\{x\in X,~~\deg_G(x)=5\}.\]   
	By Claim \ref{c9}  and since $K_{2,2}\nsubseteq G$ and  $38 \leq|E(G)|  \leq 39$, it is easy to say that there exist at least two members of $X$ say $x,x'$ such that $\Delta=\deg_G(x)=\deg_G(x')=5$. So $|A|\geq 2$.  Now we verify the following two claims:
			\begin{claim}\label{c12}
			We have  $|A|\leq 3$.
		\end{claim}	
		\begin{proof}[Proof of Claim \ref{c12}] 		
			By contradiction,	w.l.g let $|A|\geq 4$. Now consider $|E(G)|$. Since $38 \leq|E(G)|  \leq 39$, if $|E(G)|=38$, then  as $|X|=9$ one can said that there exist two vertices of $X\setminus A$ say $x',x''$ such that for each $x\in \{x',x''\}$,  $|N_G(x_i)|\leq 3$. Therefore  $|E(G\setminus \{x',x''\})|\geq 32$. Hence  by Lemma \ref{l0}, as 	$z((7,14), K_{2,2})\leq  31$  we have $K_{2,2}\subseteq G[X\setminus\{x_5,x_6\}, Y]$, a contradiction. So suppose that 	$|E(G)|=39$, then  as $|X|=9$  there is one vertex  of $X\setminus A$ say $x$ such that for  $|N_G(x)|\leq 3$.   Therefore we have $|E(G\setminus \{x\})|\geq 36$. Hence  by Lemma \ref{l0}, as 	$z((8,14), K_{2,2})\leq  35$  we have $K_{2,2}\subseteq G[X\setminus\{x\}, Y]$, a contradiction again. Hence claim holds.
		\end{proof}
		\begin{claim}\label{c10}
			For each $x,x'\in A$,  $|N_{G}(x)\cap N_{G}(x') |=1$.  
		\end{claim}	
		\begin{proof}[Proof of Claim \ref{c10}] 
			By contrary, w.l.g let $x_1, x_2\in A$ and  $|N_{G}(x_1)\cap N_{G}(x_2) |=0$, $Y_1= N_G(x_1)=\{y_1,\ldots,y_5\}$ and $N_G(x_2)=Y_2=\{y_6,\ldots,y_{10}\}$. By Claim \ref{c12} assume that $|A|= 3$ and w.l.g let $x_3\in A$, $N_G(x_3)=Y_3$. As   $K_{2,2}\nsubseteq G$, one can say that  $|N_G(x_3)\cap Y\setminus (Y_1\cup Y_2)|\geq 3$. Therefore, since $|X|=9$, $|A|=3$, and  $|E(G)|\leq  39$, we have $|N_G(x)|=4$ for at least five vertices of  $X\setminus A$.  Now, since   $K_{2,2}\nsubseteq G$, one can say that  $|N_G(x)\cap Y\setminus (Y_1\cup Y_2)|=2$ for this  vertices. Hence, as  $|N_G(x_3)\cap Y\setminus (Y_1\cup Y_2)|\geq 3$, it is easy to check that  $K_{2,2}\subseteq G[X\setminus\{x_1,x_2\}, Y\setminus (Y_1,Y_2)]$, a contradiction. So let $|A|=2$. Therefore $|E(G)|= 38$, and $|N_G(x)|=4$ for each $x\in X\setminus A$ and $|N_G(x)\cap Y\setminus (Y_1\cup Y_2)|\geq 2$. Hence as $|Y\setminus (Y_1\cup Y_2)|=4$ and  $|X\setminus A|=7$, by pigeon-hole principle it is easy to say that  $K_{2,2}\subseteq G[X\setminus\{x_1,x_2\}, Y\setminus (Y_1,Y_2)]$, a contradiction again.  So claim holds.
		\end{proof}
	Now,	w.l.g assume that $x_1,x_2\in A$ and let $Y_1=\{y_1,\ldots,y_5\}$, $Y_2=\{y_1,y_6\ldots,y_9\}$, where $Y_i=N_G(x_i)$ for $i=1,2$. Define $A'$ as follow:
		\[ A'=\{x\in X\setminus A ,~~\deg_G(x)=4\}.\] 
By Claim \ref{c12}, as $38\leq |E(G)|\leq  39$, we have $|A'|\geq 5$.  Now we verify the next two claims:		
		\begin{claim}\label{c11}
			If  $|A|= 3$,  then  $y_1\notin N_G(x)$ for each $x\in A\setminus\{x_1,x_2\}$.
		\end{claim}	
		\begin{proof}[Proof of Claim \ref{c11}] 
			W.l.g suppose that $x_3 \in A$ and by contrary let $y_1\in N_G(x_3)$. Hence $|\cup_{i=1}^{i=3}Y_i|=13$. Assume that $\{y_{14}\}= Y\setminus \cup_{i=1}^{i=3}Y_i$. So as   $K_{2,2}\nsubseteq G$,  $y_{14}\in N_G(x)$ and $y_1\notin N_G(x)$ for each $x\in A'$. Also  $|N_G(x)\cap Y_i\setminus\{y_1\}|=1$ for each $x\in A'$ and $i=1,2$. Now, as $|Y_1\setminus\{y_1\}|=4$ and $|A'|\geq 5$, it is clear that   $K_{2,2}\subseteq G[A', \{y,y_{14}\}]$ for some  $y\in Y_1$, a contradiction. 
		\end{proof}

		\begin{claim}\label{c13}
			If  $|A|= 3$, then $K_{4,4}\subseteq \overline{G}$.
		\end{claim}	
		\begin{proof}[Proof of Claim \ref{c13}]
			W.l.g let $ A=\{x_1,x_2,x_3\}$. Also by Claim \ref{c11} and \ref{c10}, w.l.g let  $N_G(x_3)=Y_3=\{y_2, y_6,y_{10}, y_{11}, y_{12}\}$. Therefore  $|\cup_{i=1}^{i=3}Y_i|=12$.	 	Since  $|A'|\geq 5$, w.l.g we may suppose  that $ \{x_4,x_5,x_6,x_7,x_8\}\subseteq A'$. Also one can assume that $|N_G(x)\cap Y'''|=0$ for each $x\in A'$, where $Y'''=\{y_1,y_2,y_6\}$. Otherwise, let $y_1\in N_G(x_4)$( for other case the proof is the same). So, as $K_{2,2}\nsubseteq G$, we have $|N_G(x_4)\cap (Y_3 \setminus \{y_2,y_6\})|=1$ that is $\{y_{13}, y_{14}\}\subseteq N_G(x_4)$. Now w.l.g assume that  $N_G(x_4)=\{y_1,y_{10},y_{13}, y_{14} \}$. Therefore it can be said that $|N_G(x)\cap \{y_1,y_2,y_6,y_{10}\}|=0$ for each $x\in A'\setminus\{x_4\}$, otherwise   $K_{2,2}\nsubseteq G[\{x_4, x\}, \{y, y'\}]$ for some $y\in \{y_1,y_2,y_6,y_{10}\}$ and $y'\in \{y_{13},y_{14}\}$, a contradiction. Therefore,  $K_{4,4}\subseteq \overline{G}[A'\setminus\{x_4\},  \{y_1,y_2,y_6,y_{10}\}]$.
			
			So, let $|N_G(x)\cap Y'''|=0$ for each $x\in A'$. If there is a vertex of $A'$ say $x$, such that $|N_G(x)\cap \{y_{13}, y_{14}\}|=2$, then the proof is same. Hence we may assume that $|N_G(x)\cap \{y_{13}, y_{14}\}|=1$ for each $x\in A'$. Also as $K_{2,2}\nsubseteq G$ for each $x\in A'$, we have $|N_G(x)\cap \{y_3,y_4,y_5\}|=|N_G(x)\cap \{y_7,y_8,y_9\}=|N_G(x)\cap \{y_{10}, y_{11},y_{12}\}=|N_G(x)\cap \{y_{13},y_{14}\}=1$.  Therefore, as $|A'|\geq 5$, by pigeon-hole principle  there exists at leas one member of $\{y_{13}, y_{14}\}$ say $y_{13}$, such that  $|N_G(y_{13})\cap A'|\geq 3$. As  $|N_G(x)\cap \{y_3,y_4,y_5\}|=1$, if $|N_G(y_{13})\cap A'|\geq 4$ then it is easy to check that $K_{2,2}\subseteq G[N_G(y_{13})\cap A', \{y,y_{13}\}]$ for some $y\in \{y_3,y_4,y_5\}$,  a contradiction.  Hence, w.l.g let $\{x_4, x_5, x_{6}\}= N_G(y_{13})\cap A'$, that is $x_7,x_8\in N_G(y_{14})$.
			
			Now, w.l.g assume that $N_G(x_4)= Y_4=\{y_3,y_7,y_{10}, y_{13}\}$, $N_G(x_5)= Y_5=\{y_4,y_8,y_{11}, y_{13}\}$, and $N_G(x_6)= Y_6=\{y_5,y_9,y_{12}, y_{13}\}$. Consider $D=\{y_5,y_9,y_{12}\}$, for $i=7,8$ if there exists a vertex of $D$ say $y$, such that $yx_i\in E(G)$, then $K_{2,2}\subseteq G[\{x_7,x_8\}, \{y,y_{14}\}]$, a contradiction. Also there is a vertex of $D$ say $y$, so that $yx_7,yx_8\in E(\overline{G})$, otherwise $K_{2,2}\subseteq G[\{x_6,x_i\}, D]$ for some $i\in \{7,8\}$, a contradiction again. Therefore w.l.g let $x_7y_5,x_8y_5\in E(\overline{G})$, hence  $K_{4,4}\subseteq \overline{G}[\{x_4,x_5,x_7,x_8\}, Y'''\cup \{y_5\}]$. 	So claim holds.
		\end{proof}
		Now by Claim \ref{c13}, assume that $A=\{x_1,x_2\}$, therefore  $A'=X\setminus\{x_1,x_2\}$.  For $i=1,2$ set $Y'_i=Y_i\setminus \{y_1\}$ and set $Y'=\{y_{10},\ldots,y_{14}\}$.  Now we verify two claims as follow:
		\begin{claim}\label{c14}
			For each $y\in Y'_1\cup Y'_2$,  we have $|N_G(y)\cap A'|\leq 2$.  
		\end{claim}	
		\begin{proof}[Proof of Claim \ref{c14}] 
			By contradiction, let $|N_G(y)\cap A'|\geq 3$ for at least one $y\in Y'_1\cup Y'_2$ and let $X'= N_G(y)\cap A'$. As $|N_G(x)\cap Y'|\geq 2$ and $|X'|\geq 3$, it can be  said that $|N_G(y')\cap X'|\geq 2$ for  one $y'\in Y'$, which means that  $K_{2,2}\nsubseteq G[ X', \{y,y'\}]$, a contradiction.
		\end{proof}	 
		By an argument similar to the proof of Claim \ref{c14},  we can say that  the following claim is established. 
		\begin{claim}\label{c15}
			$|N_G(y_1)\cap A'|\leq 1$.  
		\end{claim}	
		%\begin{proof}[Proof of Claim \ref{c5}] 
		%By contrary, w.l.g assume that $x_3,x_4\in N_G(y_1)$.  Set $Y'=\{y_{10},\ldots,y_{14}\}$, as $K_{2,2}\nsubseteq G$ and $y_1\in N_G(x)$ for each $x\in W$, then  it can be  say that $|N_G(x_i)\cap Y'|=3$ for $i=3,4$. Therefore as $|Y'|=5$, it is easy to say that  $|N_G(x_3)\cap N_G(x_4) \cap Y'|=1$ which means that $K_{2,2}\nsubseteq G[W, \{y_1,y'\}]$ where $y'\in N_G(x_3)\cap N_G(x_4) \cap Y'$, a contradiction.
		%\end{proof}	 
	Now by Claim \ref{c15}, we verify two cases as follow:
		
		{\bf Case 1: $|N_G(y_1)\cap A'|= 0$}. In this case, we verify  the following claim.
		\begin{claim}\label{c16}
			If there exists a vertex of $A'$ say $x$, such that $|N_G(x)\cap Y'|=3$, then  $K_{4,4}\subseteq \overline{G}$.
		\end{claim}	
		\begin{proof}[Proof of Claim \ref{c16}] 
			W.l.g assume that $|N_G(x_3)\cap Y'|=3$. Therefore as $|N_G(x_3)|=4$, one can say that $|N_G(x_3)\cap Y'_i|= 0$ for one $i\in \{1,2\}$. W.l.g assume that $|N_G(x_3)\cap Y'_1|= 0$. Hence, as $|A'\setminus\{x_3\}|=6$, $|Y'_1|=4$,  and $|N_G(x)\cap Y_1'|\leq 1$ for each $x\in A'\setminus \{x_3\}$, then one can say that  there exist two vertices of $Y'_1$ say $y'_1, y'_2$ such that $|N_G(y)\cap A'|= 1$ for each $y\in \{y'_1,y'_2\}$. Also  as $|A'|=7$, $|Y'_2|=4$,  and $|N_G(x)\cap Y_2'|\leq 1$ for each $x\in A'\setminus \{x_3\}$, thus  there exists one vertex of $Y'_2$ say $y'_3$, such that $|N_G(y'_3)\cap A'|= 1$. Set $W=\{y_1,y'_1,y'_2,y'_3\}$. Since $|N_G(y_1)\cap A'|= 0$, we have $ |\cup_{y\in W}(N_G(y)\cap A')|\leq 3$, so since $|A'|=7$, we have $K_{4,4}\subseteq \overline{G}[A',W]$.
		\end{proof}	 
		Therefore by Claim \ref{c16}, we may suppose that  $|N_G(x)\cap Y_i'|=1$ for each $x\in A'$ and each $i\in \{1,2\}$. Hence by Claim \ref{c14}, and using the fact that $|A'|=7$, $|Y_i|=4$, it can be said that there exists one vertex of $Y'_1$ say $y'_1$, and one vertex of $Y'_2$ say $y'_2$, such that $|N_G(y)\cap A'|= 1$ for each $y\in \{y'_1,y'_2\}$. Also as $|N_G(x)\cap Y_i'|=1$ for each $x\in X'$ and each $i\in \{1,2\}$, we have $|N_G(x)\cap Y'|=2$ for each $x\in X'$. Hence by Claim \ref{c14}, using the fact that $|A'|=7$ and $|Y'|=5$, it can be said that there exists one vertex of $Y'$ say $y'_3$, such that $|N_G(y)\cap A'|= 2$. Now set $W=\{y_1,y'_1, y'_2,y'_3\}$. We note that $N_G(y_1)=\{x_1,x_2\}$. Hence assume that $N_G(y'_1)=\{x_1,x'_1\}$, $N_G(y'_2)=\{x_2,x'_2\}$ also let $N_G(y'_3)=\{x'_3,x'_4\}$, where $x'_i\in A'$. If there exists $i,j\in \{1,2,3,4\}$, such that $x'_i=x_j'$, then it can be said that $ |\cup_{y\in W}(N_G(y))|\leq 5$, which means that $K_{4,4}\subseteq \overline{G}[X,W]$. So suppose that  $x'_i\neq x_j'$ for each $i,j\in \{1,2,3,4\}$. W.l.g assume that $x_1'=x_3, x'_2=x_4, x'_3=x_5$ and $ x'_4=x_6$. Now consider $X''=\{x_5, \ldots,x_9\}$. If there exists a vertex of $Y\setminus W$ say $y$, such that   $|N_G(y)\cap X''|\leq 1$, then it is easy to say that $K_{4,4}\subseteq \overline{G}[X'',\{y_1,y'_1,y'_2,y\}]$. So suppose that  $|N_G(y)\cap X''|\geq 2$ for each $Y\setminus W$. Therefore as $|Y\setminus W|=10$, $N_G(y'_3)=\{x_5,x_6\}$, and $|X''|=5$, one can say that $|E(G[X'',Y])|\geq 22$, that is there exist at least two vertices of $X''$ with degree at least $5$, which is impossible. Therefore, the proof of  Case 1 is complete.
		
		{\bf Case 2: $|N_G(y_1)\cap A'|= 1$}. W.l.g we may assume that $N_G(y_1)=\{x_1,x_2,x_3\}$ also let $N_G(x_3)=Y_3=\{y_1,y_{10},y_{11},y_{12}\}$. Since $|N_G(x)\cap Y_i'|\leq 1$ for each $x\in A'$ and each $i\in \{1,2\}$, we have $|N_G(x)\cap Y'_4|\geq 1$ for each $x\in A'\setminus \{x_3\}$, where $Y'_4=\{y_{13},y_{14}\}$. Set $Y'_i=Y_i\setminus\{y_1\}$ for $i=1,2$ and $Y'_3=\{y_{10},y_{11},y_{12}\}$. Hence we have the following claim.
		\begin{claim}\label{c17}
			We have:
			\begin{itemize}
				\item {\bf (P1):}  $|N_G(y)\cap X\setminus\{x_1,x_2,x_3\}|=3$ for each $y\in Y'_4=\{y_{13}, y_{14}\}$.  
				\item {\bf (P2):}  $|N_G(x)\cap Y'_i|=1$ for each $x\in A'\setminus\{x_3\}$ and each $i\in\{1,2,3\}$.	
			\end{itemize}
			
		\end{claim}	
		\begin{proof}[Proof of Claim \ref{c17}] 
			Prove $(P1)$: By contradiction, w.l.g suppose that  $|N_G(y_{13})\cap X\setminus\{x_1,x_2,x_3\}|\geq 4$. W.l.g assume that $A''=\{x_4,x_5,x_6,x_7\}\subseteq N_G(y_{13})$. If $|N_G(x)\cap \{y_{14}\}|= 0$, then one say that $|N_G(x)\cap Y'_3|= 1$ for each $x\in A''$, and since $|A''|=4$ and $|Y'_3|=3$, it is easy to say that $K_{2,2}\subseteq G[A'', \{y,y_{13}\}]$ for some $y\in Y'_3$. So suppose that $\{y_{14}\}\in N_G(x)$ for one $x\in A''$. W.l.g assume that $y_{14}\in N_G(x_4)$. If $|N_G(x_4)\cap Y'_3|= 1$ then the proof is same. So suppose that $|N_G(x_4)\cap Y'_3|= 0$, and w.l.g assume that $N_G(x_4)\cap Y'_1= \{y_2,y_6, y_{13}, y_{14}\}$. Now set $B= X\setminus\{x_1,x_2,x_3,x_4\}$ and $B'=\{y_1,y_2,y_3\}$.  As $|N_G(y_1)\cap A'|= 1$, and for each $x\in X\setminus\{x_1,x_2\}$, $|N_G(x)\cap Y'_4|\geq 1$, therefore we have $K_{5,3}\cong [B, B']\subseteq \overline{G}$. Hence if there exist a vertex of $Y\setminus B'$ say $y$ such that $|N_{\overline{G}}(y)\cap B|\geq 4$, then  $K_{4,4}\cong [B, B'\cup \{y\}]\subseteq \overline{G}$. So let $|N_{\overline{G}}(y)\cap B|\leq 3$, that is $|N_{G}(y)\cap B|\geq 2$ for each $y\in Y\setminus B'$. Now as $|Y\setminus B'|=11$ one can said that $ |E(G[B, Y\setminus B'])|\geq 22$. Therefore as $|B|=5$, one can say that there exists at least two vertices of $B$ say $x,x'$ such that $|N_G(x)|=|N_G(x')|=5$, a contradiction to $|A=\{x_1,x_2\}|=2$. 
			
			To prove $(P2)$, if for one $x\in A'\setminus\{x_3\}$ and one $i\in\{1,2,3\}$, $|N_G(x)\cap Y'_i|=0$, then  $|N_G(x)\cap Y'_4|= 2$. Hence as $|A'\setminus \{x_3\}|=6$, and $|N_G(x)\cap Y'_4|\geq 1$ for each $x\in A'\setminus \{x_3\}$, then it can be cheeked that  $|N_G(y)\cap X\setminus\{x_1,x_2,x_3\}|\geq 4$ for one  $y\in Y'_4$, and the proof is complete by part $(P1)$.
		\end{proof}	 
		
		Now by Claim \ref{c17}, w.l.g suppose that $R=N_G(y_{13})\cap X\setminus\{x_1,x_2,x_3\}= \{x_4,x_5,x_6\}$  and $R'=N_G(y_{14})\cap X\setminus\{x_1,x_2,x_3\}= \{x_7,x_8,x_9\}$.  Hence by $(P1)$ and $(P2)$ and  w.l.g we can suppose the following.
		\begin{itemize}

			\item {\bf (P3):}  $N_G(x_{4})= \{y_2,y_6,y_{10}, y_{13}\}$.
			\item {\bf (P4):}  $N_G(x_{5})= \{y_3,y_7,y_{11}, y_{13}\}$. 
			\item {\bf (P5):}  $N_G(x_{6})= \{y_4,y_8,y_{12}, y_{13}\}$. 	
		\end{itemize}  
		Now consider $R'$, by $(P2)$, w.l.g let $y_{10}\in N_G(x_7)$,  $y_{11}\in N_G(x_8)$, and  $y_{12}\in N_G(x_9)$. As $|R'|=3$ and $|Y'_1|=|Y'_2|=4$ and by $(P2)$, it can be said that the following  properties are established.
		\begin{itemize}
			\item {\bf (P6):} There exists a vertex of $R'$ say $x$, so that $|N_G(x)\cap Y'_1\setminus\{y_5\}|= |N_G(x)\cap Y'_2\setminus\{y_9\}|=1$.	  	
		\end{itemize}
		Therefore, by $(P6)$ w.l.g we may suppose that $x=x_7$, $N_G(x_{7})\cap Y'_1\setminus\{y_5\}=\{y'\}$ and  $N_G(x_7)\cap Y'_2\setminus\{y_9\}=\{y''\}$. Since $y_{10}\in N_G(x_7)$, it can be said that $y'\neq y_2$ and $y''\neq y_6$, otherwise either $K_{2,2}\subseteq G[ \{x_4,x_7\}, \{y_2,y_{10}\}]$ or $K_{2,2}\subseteq G[ \{x_4,x_7\}, \{y_6,y_{10}\}]$, a contradiction. Hence w.l.g assume that $y'=y_3$. As $y_3,y_7\in N_G(x_5)$, one can say that $y''=y_8$. Therefore we have the following.
		\begin{itemize}
			\item {\bf (P7):}   $N_G(x_{7})= \{y_3,y_8,y_{10}, y_{13}\}$. 
		\end{itemize}
		Now by considering $(P4), (P5), (P6)$, and $P(7)$ and since $N_G(x_3)\cap (Y'_1\cup Y'_2)=\emptyset$, one can say that  $K_{4,4}\subseteq \overline{G}[\{x_3,x_5,x_6,x_7\}, \{y_2,y_5,y_6,y_9\}]$. Which means that the proof of Case 2 is complete.
		
		Therefore By Case 1 and Case 2, the proof is complete.
	\end{proof}	
	In the following  theorem,	 we find the values of $BR_m(K_{2,2}, K_{3,3})$ for each $m\in \{10,\ldots,13\}$.
	
	\begin{theorem}\label{th7}
		$BR_m(K_{2,2}, K_{4,4})=14$ for each $m\in \{10,11,12,13\}$.
	\end{theorem}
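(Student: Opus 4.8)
The plan is to derive both bounds purely from monotonicity in the number of rows, so that essentially no new combinatorial work is required. For the upper bound $BR_m(K_{2,2},K_{4,4})\le 14$, I would take an arbitrary subgraph $G$ of $K_{m,14}$ with $m\in\{10,11,12,13\}$, fix any subset $X'\subseteq X$ with $|X'|=9$, and apply Theorem~\ref{th6} (which gives $BR_9(K_{2,2},K_{4,4})=14$) to the induced subgraph $G[X',Y]\subseteq K_{9,14}$: it yields either $K_{2,2}\subseteq G[X',Y]\subseteq G$ or $K_{4,4}\subseteq\overline{G[X',Y]}=\overline{G}[X',Y]\subseteq\overline{G}$. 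Since both containments survive when the deleted rows are restored, every subgraph of $K_{m,14}$ already forces $K_{2,2}\subseteq G$ or $K_{4,4}\subseteq\overline{G}$, whence $BR_m(K_{2,2},K_{4,4})\le 14$.

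For the lower bound $BR_m(K_{2,2},K_{4,4})\ge 14$, I would invoke Theorem~\ref{th1}, namely $BR(K_{2,2},K_{4,4})=14$. By definition this means $K_{13,13}$ admits a subgraph $H$ with $K_{2,2}\nsubseteq H$ and $K_{4,4}\nsubseteq\overline{H}$. Writing $X_0$ for the $13$-element side of this $K_{13,13}$ and deleting any $13-m$ of its vertices produces a subgraph $G$ of $K_{m,13}$; then $G\subseteq H$ gives $K_{2,2}\nsubseteq G$, while $\overline{G}$ is an induced subgraph of $\overline{H}$, so $K_{4,4}\nsubseteq\overline{G}$ as well. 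Hence $K_{m,13}\rightarrow(K_{2,2},K_{4,4})$, i.e.\ $n=13$ does not suffice, so $BR_m(K_{2,2},K_{4,4})\ge 14$.

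Combining the two inequalities yields $BR_m(K_{2,2},K_{4,4})=14$ for every $m\in\{10,11,12,13\}$, which completes the proof of Theorem~\ref{th7} and, together with Theorems~\ref{th2}--\ref{th6}, of Theorem~\ref{M.th}. There is no genuine obstacle here; the only point needing (minimal) care is that passing to an induced subgraph on fewer rows commutes with taking the bipartite complement, so a $K_{4,4}$-free complement stays $K_{4,4}$-free and a $K_{2,2}$-free graph stays $K_{2,2}$-free. In the write-up I would record the underlying monotonicity as a one-line remark — $BR_{m'}(H_1,H_2)\le BR_m(H_1,H_2)$ whenever $m'\ge m$, and symmetrically a good colouring of $K_{b-1,b-1}$ restricts to one of $K_{m,b-1}$ — and then simply specialize it to $H_1=K_{2,2}$, $H_2=K_{4,4}$, $m'\in\{10,11,12,13\}$, $m=9$, and $b=14$.
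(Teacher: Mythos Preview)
Your proof is correct, and both you and the paper obtain the lower bound the same way, by restricting the good colouring of $K_{13,13}$ supplied by Theorem~\ref{th1}. For the upper bound, however, the paper argues independently at $m=10$: since $|E(K_{10,14})|=140$ while $z((10,14),K_{2,2})\le 42$ and $z((10,14),K_{4,4})\le 97$ (Lemma~\ref{l0}), any $G\subseteq K_{10,14}$ has either $|E(G)|\ge 43$ or $|E(\overline G)|\ge 98$, forcing $K_{2,2}\subseteq G$ or $K_{4,4}\subseteq\overline G$; monotonicity then pushes this up to $m=11,12,13$. You instead reach one step further back and simply inherit the upper bound from Theorem~\ref{th6} ($BR_9=14$) via the same row-monotonicity. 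Your route is more elementary in that it requires no additional Zarankiewicz inputs, whereas the paper's route is self-contained for $m\ge 10$ and does not lean on the lengthy proof of Theorem~\ref{th6}.
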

	\begin{proof}
	By Theorem \ref{th1} as  $BR(K_{2,2}, K_{4,4})=14$ 	it is sufficient to show that  $BR_{10}(K_{2,2}, K_{4,4})= 14$. Suppose that $G$ be a subgraph of $K_{10,14}$, hence as  $|E(K_{10,14})|=140$, then either $|E(G)|\geq  43$ or $|E(\overline{G})|\geq 98$. Therefore by Lemma \ref{l0} as 	$z((10,14), K_{2,2})\leq  42$ and $z((10,14), K_{4,4})\leq  97$, it can be said that either $K_{2,2}\subseteq G$ or $K_{4,4}\subseteq \overline{G}$. Therefore, $BR_m(K_{2,2}, K_{4,4}) = 14$, for each $m\in \{10,11,12,13\}$. 
	\end{proof}
	
	\begin{proof}[\bf Proof of Theorem \ref{M.th}]
		For $m=2,3,4$, it is easy to say that $BR_m(K_{2,2}, K_{3,3})$ does not exist. Now,
		by combining Theorems \ref{th1}, \ref{th2}, \ref{th3}, \ref{th4}, \ref{th5}, \ref{th6},   and \ref{th7},  we conclude that the proof of Theorem \ref{M.th} is complete.
	\end{proof}
 
	%%%%%%%%%%%%%%%%%%%%%%%%%%%%%%%%%%%%%%%%%
	\bibliographystyle{spmpsci} 
	\bibliography{BI}
\end{document}